\numberwithin{equation}{section}
\def\ps{\mathscr{P}}
\def\F{\mathcal F}
\def\f2{\mathbb F_2}
\def\fp{\mathbb F_p}
\def\a2{\mathcal A_2}
\def\ap{\mathscr A}
\def\v{\mathcal V}
\def\vf{{\mathcal V}^f}
\def\n{\mathbb N}
\def\nil{\mathcal{N}il}
\def\u{\mathscr U}
\def\p{\mathcal P}
\newcommand{\ten}[3]{\overset{#2}{\underset{#1}{\bigotimes}}#3}
\newcommand{\ex}[4]{\mathrm{Ext}_{#1}^{#2}\left(#3,#4\right)}
\newcommand{\ho}[3]{\mathrm{Hom}_{#1}\left(#2,#3\right)}
\newcommand{\dsum}[3]{\underset{#1}{\overset{#2}{\bigoplus}}#3}
\newcommand{\som}[3]{\underset{#1}{\overset{#2}{\sum}}#3}
\newcommand{\keywords}[1]{%
  \let\@@oldtitle\@title%
  \gdef\@title{\@@oldtitle\footnotetext{\emph{Key words and phrases.} #1.}}%
}
\newtheorem*{theorem*}{Theorem}
\newtheorem*{property*}{Property}
\newtheorem*{conj*}{Conjecture}
\newtheorem{thm}{Theorem}[section]
\newtheorem{pro}[thm]{Proposition}
\newtheorem{cor}[thm]{Corollary}
\newtheorem{lem}[thm]{Lemma}
\newtheorem{notat}[thm]{Notation}
\newtheorem{defi}[thm]{Definition}
\newcounter{nmdthmcnt}
\cleardoublepage\thispagestyle{plain}\null\begin{flushleft}%
\title{Homogeneous strict polynomial functors as unstable modules}
\author{{\selectlanguage{vietnam}NGUYỄN Thế Cường}\thanks{\selectlanguage{english}
I would like to thank Professor Lionel SCHWARTZ for urging me to prepare this paper. It is also a great pleasure for me to express my sincere thanks to all members of the VIASM for their hospitality during my two-month visit in Hanoi where the paper was baptised. I would like to take this opportunity to thank NGUYEN Dang Ho Hai and PHAM Van Tuan for having offered many significative discussions. This work would not be presented properly without precious suggestions from Vincent Franjou.}\hspace{1.5mm}\footnote{
Partially supported by the program ARCUS Vietnam MAE, Région IDF.
} }
\keywords{Steenrod algebra, strict polynomial functors, unstable modules}
\date{21st July 2014}
\begin{document}
\maketitle
% content/abstract

\begin{abstract}
A relation between Schur algebras and Steenrod algebra is shown in \cite{Hai} where to each strict polynomial functor the author associates an unstable module. We show that the restriction of Hai's functor to the subcategory of strict polynomial functors of a given degree is fully faithfull.
\end{abstract}

% content/introduction

\section{Introduction}
The search for a relation between Schur algebras and the Steenrod algebra has been a source of common interest between representation theorists and algebraic topologists for over thirty years. 
Functorial points of view, on unstable modules \cite{HLS93}, and on modules over Schur algebras \cite{FS97}, have given an efficient setting for studying such relation.
For the Steenrod algebra side, \cite{HLS93} uses Lannes' theory to construct a functor $f$, from the category $\u$ of unstable modules, to the category $\F$ of functors from finite dimensional $\fp-$vector spaces to $\fp-$vector spaces.
This functor $f$ induces an equivalence between the quotient category $\u/\nil$ of $\u$ by the Serre class of nilpotent modules, and the full subcategory $\F_{\omega}$ of analytic functors. 
The interpretation of modules over Schur algebras given by \cite{FS97} uses an algebraic version of the category of functors, the category $\p$ of strict polynomial functors.
The category $\p$ decomposes as a direct sum  $\bigoplus_{d\geq 0}\p_{d}$ of its subcategories of
homogeneous functors of degree $d$. The category $\p_{d}$ is equivalent to the category of modules
over the Schur algebra $S(n,d)$ for $n \geq d$ \cite[Theorem 3.2]{FS97}. The presentation of
$\p_{d}$ as a category of functors  with an extra structure comes with a functor $\p_{d}\to \F$. 
Nguyen D. H. Hai showed  \cite{Hai} that this functor  $\p_{d}\to \F$
factors through the category $\u$ by a functor $\bar{m}_{d}:\p_{d}\to\u$. These functors $\bar{m}_{d}$ induce a functor $\bar{m}:\p\to\u$. The functor $\bar{m}$ has remarkably interesting properties. In particular, it is exact and commutes with tensor products. The relevance of this last property to computation will soon be apparent.

We observe that Hom-groups between unstable modules coming from strict polynomial functors via Hai's functor are computable: they are isomorphic to the Hom-groups of the corresponding strict polynomial functors in many interesting cases. 
The primary goal of this paper is to generalize these results to the whole category $\p_{d}$. The main theorem of the present work goes as follows:
\begin{thm}\label{main1}
The functor $\bar m_d:\p_{d}\to \u$ is fully faithful.
\end{thm}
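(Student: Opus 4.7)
The plan is to exploit the projective generators of $\p_d$. For each finite-dimensional $\fp$-vector space $V$, the representable functor $\Gamma^{d,V}(W)=\Gamma^d(\ho{\fp}{V}{W})$ is projective in $\p_d$, and the Yoneda lemma provides a natural isomorphism $\ho{\p_d}{\Gamma^{d,V}}{G}\cong G(V)$ for every $G\in\p_d$. Every $F\in\p_d$ admits a (finite) presentation
\[
\Gamma^{d,V_1}\ra \Gamma^{d,V_0}\ra F\ra 0
\]
by direct sums of such representables. Since $\bar m_d$ is additive and exact (as recalled in the introduction), a standard five-lemma argument, applied to the image under $\bar m_d$ of this presentation together with the left-exactness of $\ho{\u}{-}{\bar m_d G}$ and $\ho{\p_d}{-}{G}$, reduces the full faithfulness of $\bar m_d$ to the single claim that for every $V$ and every $G\in\p_d$ the natural map
\[
\rho_{V,G}\colon G(V)=\ho{\p_d}{\Gamma^{d,V}}{G}\ra \ho{\u}{\bar m_d\Gamma^{d,V}}{\bar m_d G}
\]
is an isomorphism.

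Injectivity of $\rho_{V,G}$ is essentially free: by construction the composite $f\circ\bar m_d\colon\p_d\to\F$ is the underlying forgetful functor, which is faithful because a natural transformation in $\p_d$ is determined by its evaluation on a single $V$ of dimension $\geq d$. Hence $\bar m_d$ itself is faithful, and so is each $\rho_{V,G}$. The substance of the theorem therefore lies in \emph{surjectivity}. The next step is to identify $\bar m_d\Gamma^{d,V}$ as an explicit object of $\u$. Using that $\bar m_d$ commutes with tensor products and direct sums, and that $\Gamma^{d,V}$ is built out of the divided power $\Gamma^d$ by precomposition with the coefficient change $W\mapsto\ho{\fp}{V}{W}$, this identification reduces to computing $\bar m_d$ on the basic functors $\Gamma^n$ together with its behaviour on the constant tensor factors indexed by a basis of $V$.

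Given such an explicit description, $\ho{\u}{\bar m_d\Gamma^{d,V}}{\bar m_d G}$ becomes amenable to direct computation by classical tools of $\u$, such as the universal property $\ho{\u}{F(n)}{M}\cong M^n$ of free unstable modules and Lannes' $T$-functor for the tensor factors, yielding a direct sum of specific internal-degree components of $\bar m_d G$. These should match $G(V)$ exactly, the matching being forced at the level of $\F$ by $f\circ\bar m_d=\pi$ together with the Yoneda identification. The principal obstacle will be precisely this matching: one must verify that no ``extra'' $\u$-morphism appears beyond those induced from $\p_d$. I expect this to follow from a careful tracking of the Frobenius-twist behaviour of $\bar m_d$ on the generators $\Gamma^{d,V}$ together with the partial cases alluded to in the introduction, which treat particular classes of $G$ and can be leveraged, via the exactness of $\bar m_d$ and the above five-lemma reduction, to handle the whole of $\p_d$.
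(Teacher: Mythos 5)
Your reduction is sound as far as it goes: cutting down to the projective generators via a presentation, exactness of $\bar m_d$, and the five lemma is legitimate, and your observation that faithfulness follows because $\bar m_d$ lifts the (faithful) forgetful functor $\p_d\to\F$ is correct. This parallels the paper, which likewise reduces the theorem to a single Hom-group computation — though the paper reduces in \emph{both} variables, replacing $F$ by the projective generators $\Gamma^{\lambda}$ and $G$ by the injective cogenerators $S^{d,V}$, so that only the one concrete group $\ho{\u}{\Gamma^{\lambda}(F(1))}{S^{d,V}(F(1))}$ remains to be identified with $S^{\lambda_1}(V^{\sharp})\otimes\cdots\otimes S^{\lambda_k}(V^{\sharp})$. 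You stop with $G$ arbitrary, which leaves you a much harder target.

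The genuine gap is that you never prove the surjectivity of $\rho_{V,G}$, which you yourself identify as ``the substance of the theorem''; the final paragraph only states an expectation that it will follow from Frobenius-twist bookkeeping and ``partial cases alluded to in the introduction''. Worse, the tool you propose for the computation — Lannes' $T$-functor, i.e.\ passing through $\F$ — is exactly what the paper's introductory example shows to be insufficient: $S^{3}(F(1))$ is reduced but not $\nil$-closed, so $\ho{\u}{M}{S^{d,V}(F(1))}$ only \emph{embeds} into the corresponding Hom-group in $\F$, and that Hom-group is strictly larger (e.g.\ $\f2^{\oplus 2}$ versus $\f2$ for $\Gamma^{2}\otimes\Gamma^{1}\to S^{3}$). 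Ruling out the extra $\F$-morphisms is the entire content of the theorem, and it requires an argument you do not supply. The paper's mechanism is: (i) show $\Gamma^{\lambda}(F(1))\cong F(\lambda_1)\otimes\cdots\otimes F(\lambda_t)$ is mod-nil monogenous with an explicit mod-nil generator $\omega_{\alpha}=\imath_{\lambda_1}\otimes\mathrm{P}_0^{\delta}\imath_{\lambda_2}\otimes\cdots\otimes\mathrm{P}_0^{(t-1)\delta}\imath_{\lambda_t}$; (ii) use reducedness of $S^{d,V}(F(1))$ to see that any morphism is determined by the image of $\omega_{\alpha}$; (iii) constrain that image by a combinatorial analysis of $p$-adic decompositions of its degree together with the action of the Milnor operations $m_n(r)$. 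None of steps (i)--(iii), or any substitute for them, appears in your proposal, so the proof is not complete.
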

The theorem is proved by comparing corresponding Hom-groups in the two categories. We discuss an example of interest. Let $n$ be an integer and $V$ be an $\f2-$vector space. The symmetric group $\mathfrak{S}_{n}$ acts on $V^{\otimes n}$ by permutations. Denote by $\Gamma^{n}(V)$ the group of invariants  $\left(V^{\otimes n}\right)^{\mathfrak{S}_{n}}$ and by $S^{n}$ the group of coinvariants  $\left(V^{\otimes n}\right)_{\mathfrak{S}_{n}}$. The free unstable module generated by an element $u$ of degree $1$ is denoted by $F(1)$. It has an $\f2-$basis consisting of $u^{2^{k}}$ with $0\leq k$. To a strict polynomial functor $G$, Hai's functor $\bar{m}$ associates the unstable module $G(F(1))$. 

Denote by $\mathrm{Sq}_{0}$ the operation which associates to an homogenous element $x\in M^{n}$ of
$M\in\u$ the element $Sq^{n}x$. An unstable module $M$ is nilpotent if for every $x\in M^{n}$ there
exist an integer $N_{x}$ such that $\mathrm{Sq}_{0}^{N_{x}}x=0$. An unstable module $M$ is reduced
if
$\ho{\u}{N}{M}$ is trivial for every nilpotent module $N$. It is called $\nil-$closed if
$\ex{\u}{i}{N}{M},i=0,1$ are trivial for every nilpotent module $N$. 

We now show that
$$\ho{\p_{3}}{\Gamma^{2}\otimes \Gamma^{1}}{S^{3}}\cong\ho{\u}{\Gamma^{2}(F(1))\otimes \Gamma^{1}(F(1))}{S^{3}(F(1))}.$$ 
The readers of \cite{HLS93} might expect the latter Hom-group to be isomorphic to
$\ho{\F}{\Gamma^{2}\otimes \Gamma^{1}}{S^{3}}$. However $S^{3}(F(1))$ is not $\nil-$closed
\footnote{ The submodule of $S^{3}(F(1))$ generated by $u.u.u^{4}$ is concentrated in even degrees
but this element does not has a square root.} then such an expectation fails. By classical functor
techniques \cite[Theorem 1.7]{FFSS99}, if $\mathcal{C}$ is $\p$ or $\F$ then:
$$\ho{\mathcal{C}}{\Gamma^{2}\otimes \Gamma^{1}}{S^{3}}\cong \bigoplus_{i=0}^{3}\ho{\mathcal{C}}{\Gamma^{2}}{S^{i}}\otimes \ho{\mathcal{C}}{\Gamma^{1}}{S^{3-i}}$$
It follows that: 
\begin{align*}
\ho{\F}{\Gamma^{2}\otimes \Gamma^{1}}{S^{3}}&\cong \f2^{\oplus 2},\\
\ho{\p_{3}}{\Gamma^{2}\otimes \Gamma^{1}}{S^{3}}&\cong \f2.
\end{align*} 
The module $S^{3}(F(1))$ is not $\nil-$closed but it is reduced. On the other hand, the quotient of
the module $\Gamma^{2}(F(1))\otimes \Gamma^{1}(F(1))$ by its submodule generated by $u\otimes
u\otimes u^{4}$ is nilpotent. Therefore:
\begin{align*}
\ho{\u}{\Gamma^{(2,1)}(F(1))}{S^{3}(F(1))}&\cong \ho{\u}{\left\langle  u\otimes u\otimes u^{4} \right\rangle }{S^{3}(F(1))}\\
&\cong \f2.
\end{align*}
This example is the key to the proof of Theorem \ref{main1}.

We end the introduction by giving some further remarks on the result and stating the organization of the paper.

Theorem \ref{main1} implies that the category $\p_{d}$ is a full subcategory of the category $\u$. Unfortunately the category $\p$ itself cannot be embedded into $\u$. There is no non-trivial morphism in the category $\p$ from $\Gamma^{2}$ to $\Gamma^{1}$ but
\begin{align*}
\ho{\u}{\bar{m}(\Gamma^{2})}{\bar{m}(\Gamma^{1})}&\cong \ho{\u}{F(2)}{F(1)}\\
&\cong \fp.
\end{align*}

The category $\p_d$ is not a thick subcategory of  $\u$. Fix $p=2$, let $I^{(1)}$ denote the
Frobenius twist in $\p_{2}$, that is the base change along the Frobenius. It is proved in
\cite{FS97} that
$$
\ex{\p_2}{i}{I^{(1)}}{I^{(1)}}\cong\left\{\begin{array}{ll}
\f2&\textup{ if }i=0,1,\\
0&\text{ otherwise}.
\end{array}\right.
$$
On the other hands, the corresponding Ext group in the category $\u$ is $\ex{\u}{i}{\Phi F(1)}{\Phi
F(1)}$. As in \cite{Cuo14b}:
$$
\ex{\u}{i}{\Phi F(1)}{\Phi F(1)}\cong\left\{\begin{array}{ll}
\f2&\textup{ if }i=2^{n}-1,\\
0&\text{ otherwise}.
\end{array}\right.
$$ Therefore, $\ex{\p_2}{i}{I^{(1)}}{I^{(1)}}$ is not isomorphic to $\ex{\u}{i}{\bar{m}_{d}(I^{(1)})}{\bar{m}_{d}(I^{(1)})}\text{ for }i=2^{n}-2, n\geq 3$.
\subsection*{Organization of the article}

In section 2 we recall basic facts on the Steenrod algebra and unstable modules in the sense of
\cite{Hai}. When $p>2$, our version of the Steenrod algebra is slightly different from the version
in the sense of \cite{Ste62} since we do not consider the Bockstein operation. We also recall
Milnor's coaction on unstable modules and how it is used to determine action of the Steenrod
algebra on certain type of elements. 

The next section recalls strict polynomial
functors. The construction of Hai's functor is introduced and an easy observation on the existence
of its adjoint functors is also
given.

The structure of $\Gamma^{\lambda}(F(1))$ is treated in section 4. We show that
this module is \textit{monogenous modulo nilpotent}. A special class of \textit{generators modulo
nilpotent} of this module is computed. 

The last section deals with Theorem \ref{main1}. The proof of this theorem is based on a
combinatorial process followed by some Steenrod algebra techniques. 
\section{Steenrod algebra and unstable modules}
In this section, we follow the simple presentation in \cite[section 3]{Hai} to define the Steenrod
algebra and unstable modules. 

The letter $p$ denotes a prime number. Let $[-]$ be the integral part of a number. We denote by
$\ap$ the quotient of the free associative unital graded $\fp-$algebra generated by the $\ps^{k}$ of
degree $k(p-1)$ subject to the Adem relations 
$$\ps^{i}\ps^{j}=\sum_{t=0}^{\left[\frac{i}{p}\right]}\binom{(p-1)(j-t)-1}{i-pt}\ps^{i+j-t}\ps^{t}$$
for every $i\leq pj$ and $\ps^{0}=1$ \cite[section 3]{Hai}. 

An $\ap-$module $M$ is called unstable
if $\ps^{k}x$ is trivial as soon as $k$ is strictly greater than the degree of $x$. We denote by
$\u$ the category of unstable modules. 

Let $\mathcal{A}_{p}$ be the Steenrod algebra \cite{Ste62,Sch94}. If $p=2$ then there is an
isomorphism of algebras $\ap\to \a2$, obtained by identifying the $\ps^{k}$ with the Steenrod
squares $Sq^{k}$. The category $\u$ is equivalent to the category $\mathcal{U}$ of unstable modules
in the sense of \cite{Sch94}. If $p>2$, $\ap$ is isomorphic, up to a grading scale, to the
sub-algebra of $\mathcal{A}_{p}$
generated by the reduced Steenrod powers $P^{k}$ of degree $2k(p-1)$. The category $\u$ is
equivalent
to the subcategory $\mathcal{U}'$ of unstable $\mathcal{A}_{p}-$modules concentrating in even
degrees \cite[section 1.6]{Sch94}. 

By abuse of terminology, we call $\ap$ the Steenrod algebra and
$\ps^{k}$ the $k-$th reduced Steenrod power.

Serre \cite{Ser53} introduced the notions of \textit{admissible} and \textit{excess}. The monomial 
$$\ps^{i_{1}}\ps^{i_{2}}\ldots \ps^{i_{k}}$$
is called admissible if $i_{j}\geq pi_{j+1}$ for every $1\leq j\leq k-1$ and $i_{k}\geq 1$. The
excess of this operation, denoted by $e\left( \ps^{i_{1}}\ps^{i_{2}}\ldots \ps^{i_{k}}\right)$, is
defined by $$e\left( \ps^{i_{1}}\ps^{i_{2}}\ldots \ps^{i_{k}}\right)=
pi_{1}-(p-1)\left(\sum_{j=1}^{k}i_{j}\right).$$ The set
of admissible monomials and $\ps^{0}$ is an additive basis of $\ap$.

Let $|-|$ be the degree of an element. Denote by $\mathrm{P}_{0}$ the operation which associates to
an homogenous element $x\in M^{n}$ of
$M\in\u$ the element $\ps^{|x|}x$.
An unstable module $M$ is nilpotent if for every $x\in M^{n}$ there exist an integer $N_{x}$ such
that $\mathrm{P}_{0}^{N_{x}}x=0$. Denote by $\nil$ the class of all nilpotent modules. 

An unstable module $M$ is reduced if $\ho{\u}{N}{M}$ is trivial for every nilpotent module $N$. It
is called $\nil-$closed if $\ex{\u}{i}{N}{M},i=0,1$ are trivial for every nilpotent module $N$. 

\begin{defi}[Mod-nil epimorphism]
A morphism of unstable module $M\to N$ is called mod-nil epimorphism if its cokernel is nilpotent  .
\end{defi}

Let $n$ be a positive integer. We denote by $F(n)$ the free unstable module generated by a generator
$\imath_{n}$ of degree $n$. These $F(n)$ are projective satisfying
$\ho{\u}{F(n)}{M}\cong M^{n}$. When $n=1$ such a generator is denoted by $u$ rather than
$\imath_{1}$. As an $\fp-$vector space, $F(1)$ is generated by $u^{p^{i}},i\geq 0$. Action of the
reduced Steenrod power $\ps^{k}$ is defined by:
$$\ps^{k}\left( u^{p^{i}}\right)=\left\{ \begin{array}{ll}
                                          u^{p^{i}}&\textup{ if }k=0,\\
                                          u^{p^{i+1}}&\textup{ if }k=p^{i},\\
                                          0 & \textup{ otherwise}.
                                         \end{array}
  \right.$$
There is an isomorphism of unstable modules $F(n)\cong \left(F(1)^{\otimes
n}\right)^{\mathfrak{S}_{n}}$ where the symmetric group $\mathfrak{S}_{n}$ acts by permutations.
Then we can identify $F(n)$ with the submodule of $F(1)^{\otimes n}$
generated by
$u^{\otimes n}$ \cite[section 1.6]{Sch94}. 
\begin{defi}[Mod-nil generator]	
An unstable module $M$ is \textit{mod-nil monogeneous} if there exists a mod-nil epimorphism
$f:F(n)\to M$. The element $f(\imath_{n})$ is called a \textit{mod-nil generator} of $M$.
\end{defi}

Milnor \cite{Mil58} established that $\ap$ has a natural coproduct which makes it into
a Hopf algebra and incorporates Thom's involution as the conjugation. The dual $\ap^{*}$ of $\ap$ is
isomorphic to the polynomial algebra 
$$\fp[\xi_{0},\xi_{1},\ldots,\xi_{k},\ldots],\hspace{1mm}|\xi_i|=p^{i}-1,\xi_0=1.$$ 

Let $R=(r_{1},r_{2},\ldots,r_{k},\ldots)$ be a sequence of integers with  only finitely many
non-trivial ones. Denote by $\xi^{R}$ the product $\xi_{1}^{r_{1}}\xi_{2}^{r_{2}}\ldots
\xi_{k}^{r_{k}}\ldots$ These monomials form a basis for $\ap^{*}$. Let $m_{n}(r)\in\ap$
denote the dual of $\xi_r^n$ with respect to that basis.

If $M$ is an unstable module, the completed tensor product $M\hat{\bigotimes}\ap^{*}$ 
is the $\fp-$graded vector space defined by:
$$\left(M\hat{\bigotimes}\ap^{*}\right)^{n}=\prod_{l-k=n}^{}M^{l}\otimes
\left(\ap^{*}\right)^{k}.$$

We recall how to use Milnor's coaction to determine the action of the Steenrod algebra. There is
Milnor's coaction $\lambda:M\to M\hat{\bigotimes}\ap^{*}$ for an unstable module $M$. We write
$\lambda(x)$ as a formal sum $\sum_{R}^{}x_{R}\otimes \xi^{R}$. Let $\theta$ be a Steenrod operation
then 
$$\theta x=\sum_{R}^{}x_{R}\otimes \xi^{R}(\theta).$$
Milnor's coaction on a tensor product is determined as follows:
$$\lambda (x\otimes y)=\sum_{R}^{}\sum_{I+J=R}^{}(x_{I}\otimes y_{J})\otimes\xi^{R}.$$
Milnor's coaction on $F(1)$ is defined by:
$$\lambda(u)=\sum_{i\geq 0}^{}u^{p^{i}}\otimes\xi_{i} \textup{ and }\lambda(u^{p^{s}})=\sum_{i\geq
0}^{}u^{p^{s+i}}\otimes\xi^{p^{s}}_{i}.$$

The following observation is easy and is left to the reader.
\begin{lem}\label{mil1}
Let $n$ be an integer then $m_n(r)\left(u^{\otimes n}\right)=\left(u^{p^r}\right)^{\otimes n}$. If
$k_1,\ldots,k_{m}$ is a sequence of integer such that $p^{k_{j}}>n$ for every $j$ then:
\begin{align*}
m_n(r)\left(u^{\otimes n}\otimes u^{p^{k_{1}}}\otimes
u^{p^{k_{2}}}\otimes\cdots\otimes u^{p^{k_{m}}}\right)&=\left(u^{p^r}\right)^{\otimes n}\otimes
\left(u^{p^{k_{1}}}\otimes u^{p^{k_{2}}}\otimes\cdots\otimes u^{p^{k_{m}}}\right).
\end{align*}
\end{lem}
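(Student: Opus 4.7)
The plan is to compute both identities directly from the Milnor coaction formula $\theta x = \sum_{R} x_{R}\otimes \xi^{R}(\theta)$ recalled in the excerpt, using the fact that $m_n(r)$ is by definition the element of $\ap$ whose pairing with the monomial basis of $\ap^{*}$ is $1$ on $\xi_{r}^{n}$ and $0$ elsewhere. Hence applying $m_n(r)$ to an element $x$ amounts to extracting the coefficient of $\xi_{r}^{n}$ in $\lambda(x)\in M\hat{\bigotimes}\ap^{*}$.

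For the first identity I would expand $\lambda(u^{\otimes n})$ in $F(1)^{\otimes n}$ using the tensor product rule for $\lambda$ together with $\lambda(u)=\sum_{i\geq 0}u^{p^{i}}\otimes \xi_{i}$. The expansion is a sum indexed by $n$-tuples $(j_{1},\ldots,j_{n})$ of the terms $(u^{p^{j_{1}}}\otimes\cdots\otimes u^{p^{j_{n}}})\otimes \xi_{j_{1}}\xi_{j_{2}}\cdots \xi_{j_{n}}$. Since $\ap^{*}$ is a polynomial algebra in the $\xi_{i}$, the product $\xi_{j_{1}}\cdots \xi_{j_{n}}$ equals $\xi_{r}^{n}$ if and only if $j_{1}=\cdots=j_{n}=r$, so the unique contribution is $(u^{p^{r}})^{\otimes n}$.

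For the second identity I would extend the same computation to the longer tensor using the formula $\lambda(u^{p^{s}})=\sum_{i\geq 0}u^{p^{s+i}}\otimes \xi_{i}^{p^{s}}$ for the last $m$ factors. The $\xi$-part of a generic term of $\lambda\bigl(u^{\otimes n}\otimes u^{p^{k_{1}}}\otimes\cdots\otimes u^{p^{k_{m}}}\bigr)$ is $\xi_{j_{1}}\cdots \xi_{j_{n}}\cdot \xi_{i_{1}}^{p^{k_{1}}}\cdots \xi_{i_{m}}^{p^{k_{m}}}$. Demanding that this equal $\xi_{r}^{n}$ forces, for each $l$, the exponent of $\xi_{i_{l}}$ to be $0$ or at least $p^{k_{l}}$; since $p^{k_{l}}>n$, the only possibility is $i_{l}=0$, i.e.\ the $l$-th factor is $u^{p^{k_{l}}}\otimes 1$. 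Once this is fixed, the first $n$ indices are constrained exactly as in the first part, giving the claimed formula.

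There is no real obstacle here: once the Milnor coaction is written down and we observe that $\ap^{*}$ is polynomial in the $\xi_{i}$, the proof is a matching-of-monomials argument. The only thing that needs a sentence of care is the role of the hypothesis $p^{k_{j}}>n$, which is precisely the numerical input that prevents the tail factors from being touched by $m_{n}(r)$ and decouples the computation into the two pieces.
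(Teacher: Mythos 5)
Your argument is correct: the paper leaves this lemma to the reader, and extracting the coefficient of $\xi_r^n$ from the Milnor coaction (which the paper recalls immediately before the statement, evidently for this purpose) is exactly the intended computation, with the hypothesis $p^{k_j}>n$ entering precisely where you use it. The only point deserving a half-sentence more of care is that $\xi_0=1$ is \emph{not} a polynomial generator of $\ap^{*}$, so the assertion that $\xi_{j_1}\cdots\xi_{j_n}=\xi_r^n$ forces $j_1=\cdots=j_n=r$ requires the extra (immediate) observation that any $j_l=0$ would leave at most $n-1$ genuine generators in the product, which cannot multiply to the degree-$n$ monomial $\xi_r^n$.
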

The following proposition is a strengthening of Lemma \ref{mil1}:
\begin{pro}\label{Stee} Let $M$ and $N$ be two unstable modules.
For all class $x \in M$ of degree $n$ we have $m_n(r)(x)=\mathrm{P}_0^r(x)$.
For all class $y \in M$ and all natural number $k $ such that $p^k>n$ we have
$$
m_n(r) (x \otimes \mathrm{P}_0^k(y)) =\mathrm{P}_0^r (x) \otimes \mathrm{P}_0^k (y).
$$
\end{pro}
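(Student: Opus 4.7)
The plan is to reduce both statements to Lemma \ref{mil1} by naturality, using the freeness property $\ho{\u}{F(n)}{M}\cong M^n$ together with the standard embedding $F(n)\hookrightarrow F(1)^{\otimes n}$ recalled in the paper which identifies $\imath_n$ with $u^{\otimes n}$.

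For the first assertion, let $\phi_x : F(n) \to M$ be the unique morphism with $\phi_x(\imath_n)=x$. Since $m_n(r)$ and $\mathrm{P}_0^r$ are Steenrod operations, hence natural in morphisms of $\u$, we have $m_n(r)(x) = \phi_x(m_n(r)\imath_n)$ and $\mathrm{P}_0^r(x) = \phi_x(\mathrm{P}_0^r \imath_n)$. It is therefore enough to verify $m_n(r)\imath_n = \mathrm{P}_0^r \imath_n$ inside $F(n)$. Through the inclusion $F(n)\hookrightarrow F(1)^{\otimes n}$ the generator $\imath_n$ becomes $u^{\otimes n}$; the first clause of Lemma \ref{mil1} yields $m_n(r)(u^{\otimes n}) = (u^{p^r})^{\otimes n}$. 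Applying the Cartan formula to $\ps^n(u^{\otimes n})$ together with the unstability constraints $\ps^{1}u=u^p$ and $\ps^{j}u=0$ for $j\geq 2$ forces every surviving term to be $(u^p)^{\otimes n}$, so $\mathrm{P}_0(u^{\otimes n})=(u^p)^{\otimes n}$. An induction on $r$, where at each step the Cartan sum for $\ps^{np^{i-1}}\bigl((u^{p^{i-1}})^{\otimes n}\bigr)$ collapses to the single term with every exponent equal to $p^{i-1}$, gives $\mathrm{P}_0^r(u^{\otimes n})=(u^{p^r})^{\otimes n}$, matching $m_n(r)(u^{\otimes n})$.

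For the tensor product statement, writing $m=|y|$ (and interpreting the ambient module of $y$ as $N$), form the composite $\phi_x\otimes\phi_y : F(n)\otimes F(m)\to M\otimes N$ from the unique morphisms representing $x$ and $y$; by naturality of $\mathrm{P}_0^k$ this sends $\imath_n\otimes \mathrm{P}_0^k\imath_m$ to $x\otimes\mathrm{P}_0^k y$. The embedding $F(n)\otimes F(m)\hookrightarrow F(1)^{\otimes(n+m)}$ identifies this source element with $u^{\otimes n}\otimes(u^{p^k})^{\otimes m}$, using the first-part identity $\mathrm{P}_0^k(\imath_m)\leftrightarrow (u^{p^k})^{\otimes m}$. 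The hypothesis $p^k>n$ is precisely the condition required by the second clause of Lemma \ref{mil1}, which computes
\[
m_n(r)\bigl(u^{\otimes n}\otimes(u^{p^k})^{\otimes m}\bigr) = (u^{p^r})^{\otimes n}\otimes(u^{p^k})^{\otimes m}.
\]
Rewriting the right-hand side as $\mathrm{P}_0^r(u^{\otimes n})\otimes\mathrm{P}_0^k(u^{\otimes m})$ and transporting back along $\phi_x\otimes\phi_y$ delivers the desired identity. No step presents a real obstacle here; the whole content lies in seeing that naturality plus the freeness of $F(n)$ and $F(m)$ supplies the universal elements $u^{\otimes n}$ and $(u^{p^k})^{\otimes m}$ on which Lemma \ref{mil1} is tailor-made to act, and in checking that the numerical hypothesis $p^k>n$ is inherited unchanged by this reduction.
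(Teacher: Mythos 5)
Your proof is correct and follows essentially the same route as the paper: reduce to the universal elements $u^{\otimes n}$ and $u^{\otimes n}\otimes(u^{p^k})^{\otimes m}$ via the freeness of $F(n)$ and naturality of Steenrod operations, then invoke Lemma \ref{mil1}. The only differences are cosmetic — you spell out the Cartan-formula verification that $\mathrm{P}_0^r(u^{\otimes n})=(u^{p^r})^{\otimes n}$, which the paper leaves implicit, and you use $\phi_y:F(m)\to N$ in place of the paper's map out of $\Phi^k(F(|y|))$, which amounts to the same thing.
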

\begin{proof}
Consider the morphism $\varphi:F(n)\to M$ defined by $\imath_n=u^{\otimes n}\mapsto x$. Lemma
\ref{mil1} yield:
\begin{align*}
m_n(r)(x)&=m_n(r)\left(\varphi\left(u^{\otimes n}\right)\right)\\
&=\varphi\left(m_n(r)\left(u^{\otimes n}\right)\right)\\
&=\varphi\left(\mathrm{P}_0^r\left(u^{\otimes n}\right)\right)\\
&=\mathrm{P}_0^r\left(\varphi\left(u^{\otimes n}\right)\right)\\
&=\mathrm{P}_0^r(x).
\end{align*}
Similarly, by considering the morphism $\psi:\Phi^{k}(F(|y|))\to N$ defined by
$\ps^{p^{k}}\imath_{|y|}\mapsto \ps^{p^{k}} y$ and the product $\varphi\otimes\psi$, we obtain the
second equality.
\end{proof}

\section{Strict polynomial functors and Hai's functor}\label{section22}
The main goal of this section is to recall Hai's functor and give an easy observation on the
existence of its adjoint functors in this section. 

Following the simple presentation introduced in \cite{Pir03} we first recall the category of strict polynomial functors. Fix a prime number $p$, denote by $\v$ the category of $\fp-$vector spaces and by $\v^{f}$ its full subcategory of spaces of finite dimension. Let $n$ be a positive integer. Denote by $\Gamma^{n}(V)$ the group of invariants $\left(V^{\otimes n}\right)^{\mathfrak{S}_{n}}$. The category $\Gamma^{d}\v^{f}$ is defined by:
\begin{align*}
\textup{Ob}(\Gamma^d\vf)&=\textup{Ob}(\vf),\\ 
\ho{\Gamma^d\vf}{V}{W}&=\Gamma^d(\ho{\vf}{V}{W}).
\end{align*}
A homogeneous strict polynomial functor of degree $d$ is an $\fp-$linear functor from $\Gamma^d\vf$ to $\vf$. We denote by $\p_{d}$ the category of all these functors. The notation $\p$ stands for the direct sum $\dsum{d\geq 0}{}{\p_d}$. A strict polynomial functor is an object of the category $\p$.

We now recall the parametrized version of $\Gamma^{d}$ and $S^{d}$. For each $W\in\vf$, let $\Gamma^{d,W}$ be the functor which associates to an $\fp-$vector space  $V$ the $\fp-$vector space $\Gamma^d(\ho{\v^{f}}{W}{V})$, and let $S^{d,W}$ be the functor which associates to  an $\fp-$vector space  $V$  the $\fp-$vector space $S^d(W^{\sharp}\otimes V)$. Here, $W^{\sharp}$ stands for the linear dual of $W$. The $\Gamma^{d,W}$ are projective satisfying $\ho{\p_d}{\Gamma^{d,W}}{F}\cong F(W)$ and the $S^{d,W}$ are injective satisfying $\ho{\p_d}{F}{S^{d,W}}\cong F(W)^{\sharp}$. 

Let $\lambda=\left(\lambda_{1},\ldots,\lambda_{k}\right)$ be a sequence of integers. Denote by $\Gamma^{\lambda}$ the tensor product $\Gamma^{\lambda_{1}}\otimes \cdots\otimes \Gamma^{\lambda_{k}}$. Hai's functor $\bar{m}_d\colon \p_d\rightarrow \u$ is defined as follows. Given a strict polynomial functor $F$ in $\p_d$. In degree $e$, $\bar{m}_d(F)$ is defined to be the vector space of natural transformations from $\bar{\Gamma}^{d;e}$ to $F$:
$$\bar{\Gamma}^{d;e}:=\bigoplus\limits_{\substack{|\lambda|=d\\||\lambda||=e}}\Gamma^\lambda,\quad
\bar{m}_d(F)^e:= \ho{\p_d}{\bar{\Gamma}^{d;e}}{F},$$ where $|\lambda|,||\lambda||$ stand for
$\lambda_1 +\ldots +\lambda_n$ and $\lambda_1+p \lambda_2+ \ldots +p^{n-1}\lambda_n$ respectively.
The structural morphisms are induced by  $\ho{\p_d}{\bar{\Gamma}^{d;e}}{-}$. In other words, the
functor $\bar{m}_{d}$ can be defined as the evaluation on $F(1)$:
$$\bar{m}_{d}(G)\cong G(F(1))\textup{ for all }G\in \p_{d},$$
and the structural morphisms are defined by:
\begin{align*}
\left(\bar{m}_{d}\right)_{V,W}:\ho{\p_d}{F_1}{F_2}&\to \ho{\u}{\bar{m}_{d}(F_1)}{\bar{m}_{d}(F_2)}\cong \ho{\u}{F_1(F(1))}{F_2(F(1))}\\
f&\mapsto f(F(1)).
\end{align*}
Let $\bar{m}$ denote the induced functor from $\p$ to $\u$. 
The functor $\bar{m}$ has nice properties. It is exact and commutes with tensor products as well as Frobenius twists \cite[see sections 3 and 4]{Hai}. 

The following observation is easy and is left to the reader:
\begin{pro}\label{exist}
The functor $\bar{m}_{d}$ admits a left adjoint denoted by $l_d$ and a right adjoint denoted by $r_d$.
\end{pro}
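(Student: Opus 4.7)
The plan is to apply the adjoint functor theorem in a suitable presentable setting. Both categories are well-behaved Grothendieck abelian categories: $\p_d$ is equivalent to the category of modules over the Schur algebra $S(n,d)$ for $n\ge d$ (see \cite{FS97}), hence locally finitely presentable with a compact projective generator; the category $\u$ is likewise locally presentable (in fact locally Noetherian, with generators the $F(n)$). In such a setting, having adjoints on either side reduces to checking preservation of the appropriate (co)limits.

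First, I would verify that $\bar{m}_d$ preserves all limits. This is immediate from the definition:
$$\bar{m}_d(F)^e = \ho{\p_d}{\bar{\Gamma}^{d;e}}{F},$$
because limits in $\u$ are computed degree-wise and each functor $\ho{\p_d}{\bar{\Gamma}^{d;e}}{-}$ preserves limits.

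Second, I would verify that $\bar{m}_d$ preserves all (small) colimits. For this it suffices to observe that each $\bar{\Gamma}^{d;e}$ is a compact projective object of $\p_d$: it is a finite direct sum indexed by partitions of $d$ of size at most $e$ of tensor products $\Gamma^{\lambda_1}\otimes\cdots\otimes\Gamma^{\lambda_k}$, and each $\Gamma^{n}$ is the standard compact projective generator of $\p_n$ (representing evaluation on $\fp$). A finite tensor product of compact projectives remains compact and projective in $\p_d$, and a finite direct sum of such objects does too. Hence $\ho{\p_d}{\bar{\Gamma}^{d;e}}{-}$ commutes with arbitrary colimits, and since colimits in $\u$ are also computed degree-wise, so does $\bar{m}_d$.

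Finally, I would invoke the adjoint functor theorem for locally presentable categories: a colimit-preserving functor between locally presentable categories admits a right adjoint, and an accessible limit-preserving functor admits a left adjoint. Colimit-preservation implies accessibility, so both adjoints $l_d\dashv \bar{m}_d\dashv r_d$ exist. I do not foresee a genuine obstacle here; the only point worth highlighting is the compactness/projectivity of the $\bar{\Gamma}^{d;e}$, which is the structural input that makes the formula $\bar{m}_d(F)^e = \ho{\p_d}{\bar{\Gamma}^{d;e}}{F}$ simultaneously continuous and cocontinuous.
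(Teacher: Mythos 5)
Your argument is correct and is essentially the intended one: the paper leaves this proposition to the reader, and the expected justification is exactly that $\bar{m}_d$ is computed degree-wise by Hom out of the compact projective objects $\bar{\Gamma}^{d;e}$, hence preserves all limits and colimits (both being degree-wise in $\u$), so both adjoints exist by the adjoint functor theorem for locally presentable (Grothendieck) categories. One immaterial slip: $\Gamma^{n}=\Gamma^{n,\fp}$ is compact projective in $\p_{n}$ but is not a \emph{generator} for $n\geq 2$ (that role is played by $\Gamma^{n,V}$ with $\dim V\geq n$); your argument only uses compactness and projectivity, so nothing is affected.
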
 

\section{Mod-nil generators of some unstable modules}
As explained in the introduction, we prove Theorem \ref{main1} by comparing corresponding Hom-groups in the two categories $\p_{d}$ and $\u$. This computation can be reduced to a smaller class of strict polynomial functors. This class is described in the following proposition:
\begin{pro}[\cite{FS97}]
If $\mathrm{dim}_{\fp}W\geq d$ then $S^{d,W}$ is an injective generator of $\p_d$. The functors $
\Gamma^{\lambda}$, where $\lambda$ runs through the set of all sequences of integers whose sum is
$d$, form a system of projective generators of $\p_d$ \cite{FS97}. \end{pro}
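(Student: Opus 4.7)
The plan is to leverage the two representability formulas already recalled, namely $\ho{\p_d}{\Gamma^{d,W}}{F}\cong F(W)$ and $\ho{\p_d}{F}{S^{d,W}}\cong F(W)^{\sharp}$. By Yoneda, the first forces $\Gamma^{d,W}$ to be projective and the second forces $S^{d,W}$ to be injective, and both Hom-functors coincide (up to linear duality) with the evaluation at $W$. The proof therefore reduces to two ingredients: (a) a concrete decomposition of $\Gamma^{d,W}$ into the $\Gamma^{\lambda}$'s, and (b) faithfulness of evaluation at $W$ on $\p_{d}$ whenever $\dim_{\fp}W\geq d$.

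For (a), I would apply the exponential identity for divided powers,
$$\Gamma^{d}(V_{1}\oplus\cdots\oplus V_{n})\cong\bigoplus_{\lambda_{1}+\cdots+\lambda_{n}=d}\Gamma^{\lambda_{1}}(V_{1})\otimes\cdots\otimes\Gamma^{\lambda_{n}}(V_{n}),$$
to the natural decomposition of $W^{\sharp}\otimes V$ induced by a chosen basis of $W^{\sharp}$. This rewrites $\Gamma^{d,W}(V)$ as the direct sum of the $\Gamma^{\lambda}(V)$ with $|\lambda|=d$ and length at most $\dim W$. Under the assumption $\dim W\geq d$, every composition of $d$ into positive parts has length at most $d\leq\dim W$, so the length constraint is vacuous and each $\Gamma^{\lambda}$ with $|\lambda|=d$ appears as a direct summand of the projective $\Gamma^{d,W}$. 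Since a direct summand of a projective is projective, this already proves that every $\Gamma^{\lambda}$ with $|\lambda|=d$ is a projective object of $\p_{d}$.

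For (b), I would invoke the standard fact that a homogeneous strict polynomial functor of degree $d$ is detected by its values on any $\fp$-vector space of dimension at least $d$. The cleanest route is through the Friedlander--Suslin equivalence $\p_{d}\simeq S(n,d)\text{-mod}$ for $n\geq d$ recalled in the introduction, under which evaluation at $\fp^{n}$ corresponds to taking the underlying $S(n,d)$-module. Granted this, the representability formulas immediately yield the implications $\ho{\p_{d}}{\Gamma^{d,W}}{F}=0\Rightarrow F(W)=0\Rightarrow F=0$, showing that the family $\{\Gamma^{\lambda}:|\lambda|=d\}$ is a system of projective generators, and symmetrically $\ho{\p_{d}}{F}{S^{d,W}}=0\Rightarrow F=0$, showing that $S^{d,W}$ is an injective cogenerator. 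The only nontrivial step is (b); everything else is formal, resting on the representability formulas and the exponential property of divided powers.
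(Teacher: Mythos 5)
Your proof is correct in outline and is essentially the argument that the paper delegates to \cite{FS97} (the paper gives no proof of its own, only the citation): the exponential decomposition $\Gamma^{d,W}\cong\bigoplus_{\lambda}\Gamma^{\lambda}$ over compositions of $d$ with at most $\dim W$ parts, whence projectivity of each $\Gamma^{\lambda}$ as a direct summand of the representable projective, is exactly the standard route, and the injective half follows dually from $\ho{\p_d}{F}{S^{d,W}}\cong F(W)^{\sharp}$ together with exactness of evaluation. The one point to repair is your step (b). Deducing conservativity of evaluation at $W$ from the equivalence $\p_d\simeq S(n,d)\text{-mod}$ of \cite[Theorem 3.2]{FS97} is close to circular: that equivalence is itself proved by exhibiting $\Gamma^{d,\fp^n}$ as a projective generator, which is precisely what you are trying to establish. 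What you actually need is the weaker, self-contained statement that $F(\fp^{d})=0$ forces $F=0$. For $\dim V\leq d$ this is immediate because $V$ is a retract of $\fp^{d}$ and functors preserve retracts; for $\dim V=m>d$, decompose $F(\fp^{m})$ into weight spaces for the diagonal torus acting through $\Gamma^{d}(\mathrm{End}(\fp^{m}))$, observe that homogeneity of degree $d$ forces every occurring weight to have at most $d$ nonzero entries, and conclude that each weight space of $F(\fp^{m})$ is carried isomorphically onto a weight space of $F(\fp^{d})$ by a coordinate inclusion $\fp^{d}\hookrightarrow\fp^{m}$. With that substitution your argument is complete and agrees with the proof in the cited reference.
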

Therefore Theorem \ref{main1} is equivalent to the following lemma.
\begin{lem}\label{key12}
There are isomorphisms:
$$\ho{\u}{\Gamma^{\lambda}(F(1))}{S^{d,V}(F(1))}\cong S^{\lambda_1}(V^{\sharp})\otimes\cdots\otimes S^{\lambda_k}(V^{\sharp})$$
for every $\lambda=(\lambda_{1},\lambda_{2},\ldots,\lambda_{k}), |\lambda|=d$.
\end{lem}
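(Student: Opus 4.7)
The plan is to recognise the right-hand side of the lemma as the polynomial-functor Hom-group $\ho{\p_d}{\Gamma^{\lambda}}{S^{d,V}}$. Indeed, the exponential decomposition
$$
S^{d,V}(W_{1}\oplus\cdots\oplus W_{k})\cong\bigoplus_{|\mu|=d}S^{\mu_{1},V}(W_{1})\otimes\cdots\otimes S^{\mu_{k},V}(W_{k})
$$
combined with the Yoneda formula $\ho{\p_{e}}{\Gamma^{e}}{G}\cong G(\fp)$ applied to each factor $G=S^{\lambda_{i},V}$ gives $\ho{\p_d}{\Gamma^{\lambda}}{S^{d,V}}\cong\bigotimes_{i}S^{\lambda_{i}}(V^{\sharp})$. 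The lemma is thus equivalent to saying that the evaluation map
$$
\bar{m}_{d}\colon\ho{\p_d}{\Gamma^{\lambda}}{S^{d,V}}\longrightarrow \ho{\u}{\Gamma^{\lambda}(F(1))}{S^{d,V}(F(1))}
$$
is a bijection.

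For injectivity I would use the fact that a homogeneous strict polynomial functor of degree $d$ is determined by its value on any $\fp$-vector space of dimension at least $d$; since $u,u^{p},\ldots,u^{p^{d-1}}$ span such a subspace of $F(1)$, any natural transformation $\Gamma^{\lambda}\to S^{d,V}$ whose evaluation on $F(1)$ vanishes must vanish on this subspace, hence identically.

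For surjectivity the plan is to invoke Section~4, which will produce a mod-nil epimorphism $\pi\colon F(n_{\lambda})\twoheadrightarrow \Gamma^{\lambda}(F(1))$ sending the fundamental class $\imath_{n_{\lambda}}$ to a canonical generator of the form
$$
u^{\otimes\lambda_{1}}\otimes(u^{p^{s_{2}}})^{\otimes\lambda_{2}}\otimes\cdots\otimes(u^{p^{s_{k}}})^{\otimes\lambda_{k}}
$$
with integers $s_{2}<\cdots<s_{k}$ chosen large enough to ``separate'' the tensor factors (the introductory example is $\lambda=(2,1)$ with generator $u\otimes u\otimes u^{4}$). Since $S^{d,V}(F(1))=S^{d}(V^{\sharp}\otimes F(1))$ is reduced, every unstable morphism $\phi$ out of $\Gamma^{\lambda}(F(1))$ kills the nilpotent cokernel of $\pi$ and is therefore determined by the image of $\pi(\imath_{n_{\lambda}})$, yielding an injection
$$
\ho{\u}{\Gamma^{\lambda}(F(1))}{S^{d,V}(F(1))}\hookrightarrow S^{d,V}(F(1))^{n_{\lambda}},\qquad \phi\mapsto \phi(\pi(\imath_{n_{\lambda}})).
$$

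It remains to pin down the image of this injection. A candidate element $z\in S^{d,V}(F(1))^{n_{\lambda}}$ actually extends to an unstable morphism precisely when it satisfies all Steenrod-algebra relations imposed on $\pi(\imath_{n_{\lambda}})$ by the kernel of $\pi$; Proposition~\ref{Stee} and Lemma~\ref{mil1} translate these relations into explicit identities on the symmetric monomials in the basis elements $e_{j}\otimes u^{p^{r}}$ (where $\{e_{j}\}$ is a basis of $V^{\sharp}$) that span $S^{d,V}(F(1))$. The principal obstacle of the proof is the ensuing combinatorial identification: one must show that the solution space thus cut out coincides with the image of the natural product morphisms $\Gamma^{\lambda_{1}}\otimes\cdots\otimes\Gamma^{\lambda_{k}}\to S^{\lambda_{1},V}\otimes\cdots\otimes S^{\lambda_{k},V}\to S^{d,V}$ evaluated on $F(1)$, i.e.\ with the subspace $\bigotimes_{i}S^{\lambda_{i}}(V^{\sharp})\subseteq S^{d,V}(F(1))^{n_{\lambda}}$. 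This dimension-matching argument is the combinatorial/Steenrod-algebra content promised at the end of the introduction.
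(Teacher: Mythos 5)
Your framework coincides with the paper's: identify the right-hand side with $\ho{\p_d}{\Gamma^{\lambda}}{S^{d,V}}$, use that $\Gamma^{\lambda}(F(1))\cong F(\lambda_1)\otimes\cdots\otimes F(\lambda_k)$ is mod-nil monogenous with a generator of the form $\omega_{\alpha}=\imath_{\lambda_1}\otimes\mathrm{P}_0^{\alpha_2}\imath_{\lambda_2}\otimes\cdots\otimes\mathrm{P}_0^{\alpha_k}\imath_{\lambda_k}$, and use that $S^{d,V}(F(1))$ is reduced, so that every unstable morphism out of $\Gamma^{\lambda}(F(1))$ is determined by the image of $\omega_{\alpha}$. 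Up to that point you are reproducing Section 4 of the paper, and your faithfulness argument (restricting a natural transformation to the $d$-dimensional subspace of $F(1)$ spanned by $u,\ldots,u^{p^{d-1}}$) is sound.

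The gap is your final paragraph. The entire mathematical content of the lemma is the step you defer: showing that the image of $\omega_{\alpha}$ under an \emph{arbitrary} unstable morphism is forced to be a sum of monomials $\prod_{k=0}^{t-1}\prod_{i=1}^{\lambda_{k+1}}\left(s_{i_k}\otimes u^{p^{k\delta}}\right)$ in which $u^{p^{k\delta}}$ occurs exactly $\lambda_{k+1}$ times. This is precisely Lemma \ref{prin2}, to which the paper devotes all of Section 5, and it is neither a ``dimension-matching'' argument nor an automatic consequence of ``Steenrod-algebra relations imposed by the kernel of $\pi$.'' One first needs the combinatorial input (Lemma \ref{lp}, Corollary \ref{decomposition}, Lemma \ref{comb}): from the degree equation $\lambda_1+p^{\delta}\lambda_2+\cdots+p^{(t-1)\delta}\lambda_t=p^{l_1}+\cdots+p^{l_d}$ and the choice of $\delta$ large, the exponents $l_i$ must group into consecutive blocks $E_i$ with $\sum_{h\in E_i}p^{l_h}=p^{(i-1)\delta}\lambda_i$ and $\mathrm{card}(E_1)\leq\lambda_1$. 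Combinatorics alone does not give $\mathrm{card}(E_i)\leq\lambda_i$ for $i\geq 2$; for that one must apply the Milnor operations $m_{\lambda_1+\cdots+\lambda_i}(t\delta)$ to $\omega_{\alpha}$ and use Proposition \ref{Stee} and $\ap$-linearity to force $l_h\geq(i)\delta$ on the later blocks, whence $\mathrm{card}(E_{i+1})\leq\lambda_{i+1}$ and then equality everywhere by summing. Without carrying this out, your argument only yields the injection $\phi\mapsto\phi(\omega_{\alpha})$ together with the easy containment of $\bigotimes_i S^{\lambda_i}(V^{\sharp})$ in its image; it does not exclude additional morphisms, so the isomorphism is not established.
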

In this section we show that $\Gamma^{\lambda}(F(1))$ is mod-nil monogenous.
The following lemma show that tensor products
respect mod-nil monogenous modules.
\begin{lem}\label{modnilgeneral}
	If $M$ and $N$ are mod-nil monogeneous then so is their tensor product $M\otimes N$. 
\end{lem}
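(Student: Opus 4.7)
My approach is to reduce the claim to the case of free unstable modules and then exhibit an explicit mod-nil generator of $F(m) \otimes F(n)$, using Proposition \ref{Stee} as the key control on Steenrod actions.

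\textbf{Reduction to the free case.} Write $f \colon F(m) \to M$ and $g \colon F(n) \to N$ for the given mod-nil epimorphisms. I will first show that $f \otimes g \colon F(m) \otimes F(n) \to M \otimes N$ is itself mod-nil epi. Splicing the exact sequence obtained by tensoring $0 \to \im{f} \to M \to \cke{f} \to 0$ on the right with $N$, and the exact sequence obtained by tensoring $0 \to \im{g} \to N \to \cke{g} \to 0$ on the left with $\im{f}$, one exhibits $\cke{f \otimes g}$ as an extension of $\cke{f} \otimes N$ by $\im{f} \otimes \cke{g}$. Both pieces lie in $\nil$, since $\nil$ is a Serre class stable under tensoring with any unstable module; by stability under extensions, $f \otimes g$ is mod-nil epi. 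Consequently, it suffices to prove that $F(m) \otimes F(n)$ is mod-nil monogenous.

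\textbf{The candidate generator.} Choose an integer $K$ with $p^K > m$, and consider
\[ z := \imath_m \otimes \mathrm{P}_0^K(\imath_n) \in F(m) \otimes F(n), \]
of degree $m + p^K n$. The unique morphism $F(m + p^K n) \to F(m) \otimes F(n)$ sending the fundamental class to $z$ will be the candidate mod-nil epimorphism. The proposal is directly modeled on the example in the introduction, where the mod-nil generator of $\Gamma^2(F(1)) \otimes \Gamma^1(F(1))$ is $u \otimes u \otimes u^4 = \imath_2 \otimes \mathrm{P}_0^2(\imath_1)$.

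\textbf{Verification and main obstacle.} The remaining task is to show that $(F(m) \otimes F(n)) / \langle z \rangle$ is nilpotent. Here Proposition \ref{Stee} is crucial: because $p^K > m = |\imath_m|$, the proposition yields $m_m(r)(z) = \mathrm{P}_0^r(\imath_m) \otimes \mathrm{P}_0^K(\imath_n)$ for every $r \geq 0$, so every such tensor already belongs to $\langle z \rangle$. Since the Frobenius twist $\mathrm{P}_0^K(\imath_n)$ is annihilated by Steenrod powers of degree not divisible by $p^K$, a Cartan-formula expansion then shows that $\langle z \rangle$ contains every class of the form $\theta(\imath_m) \otimes \mathrm{P}_0^K(\imath_n)$ with $\theta \in \ap$. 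The remaining classes of $F(m) \otimes F(n)$, namely those of shape $\theta_1(\imath_m) \otimes \theta_2(\imath_n)$ in which the second factor has Frobenius height strictly less than $K$, are handled by iterating $\mathrm{P}_0$ on the whole tensor: repeated Cartan calculation eventually raises the Frobenius height of the second factor into the previous case, at which point the resulting class lies in $\langle z \rangle$, so the original class is nilpotent modulo $\langle z \rangle$. The main combinatorial obstacle is the bookkeeping of Frobenius heights across the two tensor factors, which Proposition \ref{Stee} is tailored to make tractable; the degree-$6$ computation in the introduction, where $u \otimes u \otimes u$ becomes nilpotent modulo $\langle u \otimes u \otimes u^4 \rangle$, is the prototype for the induction.
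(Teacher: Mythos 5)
Your reduction to the free case and your choice of generator coincide with the paper's: the paper deduces Lemma \ref{modnilgeneral} from Lemma \ref{modnil57} via the observation that $\mathrm{P}_0(x\otimes y)=\mathrm{P}_0(x)\otimes\mathrm{P}_0(y)$ forces tensor products to preserve mod-nil epimorphisms (your filtration of $\cke{f\otimes g}$ by $\cke{f}\otimes N$ and $\im{f}\otimes\cke{g}$ is a correct, slightly more explicit version of this), and the paper's mod-nil generator of $F(n)\otimes F(m)$ is exactly your $z=\imath_m\otimes\mathrm{P}_0^K(\imath_n)$ up to relabelling. The problem is in the verification that $(F(m)\otimes F(n))/\langle z\rangle$ is nilpotent: your two cases do not close up on each other. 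First, the assertion that a Cartan expansion gives $\theta(\imath_m)\otimes\mathrm{P}_0^K(\imath_n)\in\langle z\rangle$ is not justified as stated: $\mathrm{P}_0^K(\imath_n)$ is killed only by reduced powers whose degree is not a positive multiple of $p^K$, so the expansion of $\theta(z)$ contains surviving cross terms $\theta'(\imath_m)\otimes\theta''(\mathrm{P}_0^K\imath_n)$ with $\theta''\neq 1$ (already $\ps^{p^K}(z)=\imath_m\otimes\ps^{p^K}\mathrm{P}_0^K(\imath_n)+\cdots$), and these must be absorbed by an induction you have not set up. Second, and more seriously, applying $\mathrm{P}_0^N$ to $\theta_1(\imath_m)\otimes\theta_2(\imath_n)$ produces $\mathrm{P}_0^N(\theta_1\imath_m)\otimes\mathrm{P}_0^N(\theta_2\imath_n)$, whose second factor is a nontrivial operation applied to $\mathrm{P}_0^K(\imath_n)$, not $\mathrm{P}_0^K(\imath_n)$ itself; so ``raising the Frobenius height'' does not land you in your first case, and the two steps are circular.

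What is missing is precisely the content of the paper's proof of Lemma \ref{modnil57}. There one first disposes of an \emph{arbitrary} operation $\theta$ on the twisted factor: since $p^K>m$, the element $\imath_m\otimes\mathrm{P}_0^K(\theta\imath_n)$ equals $\theta_0(z)$, where $\theta_0$ rescales the indices of the admissible monomials of $\theta$ by $p^K$, and the Milnor operation of Lemma \ref{mil1} then gives $\mathrm{P}_0^K(\imath_m\otimes\theta\imath_n)=m_m(K)\theta_0(z)\in\ap(z)$. Only then does one run an induction on the degree of the admissible monomial applied to the \emph{untwisted} factor, writing $\ps^l\omega(\imath_m)\otimes\delta(\imath_n)=\ps^l(\omega\imath_m\otimes\delta\imath_n)-\sum_{i<l}\ps^i\omega(\imath_m)\otimes\ps^{l-i}\delta(\imath_n)$ and using that the inductive hypothesis is uniform in the operation $\delta$ on the other factor. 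Your sketch invokes the right tools (Proposition \ref{Stee} and the Cartan formula) but only applies the Milnor operation to $\imath_m\otimes\mathrm{P}_0^K(\imath_n)$ itself, never to $\imath_m\otimes\mathrm{P}_0^K(\theta\imath_n)$, and it lacks the double bookkeeping that makes the induction terminate.
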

Since $\mathrm{P}_{0}(x\otimes y)=\mathrm{P}_{0}(x)\otimes \mathrm{P}_{0}(y)$ then tensor products
respect mod-nil epimorphisms. Therefore Lemma \ref{modnilgeneral} is a consequence of the following
lemma:
\begin{lem}\label{modnil57}
For all $n,m\in\n$ the module $F(n)\otimes F(m)$ is mod-nil monogeneous. More precisely, if $q$ is a
number such that $p^{q}>n$ then $\imath_{n}\otimes \mathrm{P}_{0}^{q}\imath_{m}$ is a mod-nil
generator of $F(n)\otimes F(m)$.
\end{lem}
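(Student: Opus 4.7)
The plan is to compute the submodule $\langle w\rangle \subset F(n) \otimes F(m)$ generated by $w := \imath_n \otimes \mathrm{P}_0^q \imath_m = u^{\otimes n} \otimes (u^{p^q})^{\otimes m}$, and to identify it with $F(n) \otimes \mathrm{P}_0^q F(m)$, where $\mathrm{P}_0^q F(m)$ denotes the $\ap$-submodule of $q$-fold Frobenius images of $F(m)$. By the Cartan formula this submodule coincides with the span of those orbit sums of $u^{p^{c_1}} \otimes \cdots \otimes u^{p^{c_m}}$ in which every exponent satisfies $c_i \geq q$. Once the identification is in place, the quotient decomposes as
\[
\frac{F(n) \otimes F(m)}{\langle w\rangle} \;\cong\; F(n) \otimes \frac{F(m)}{\mathrm{P}_0^q F(m)},
\]
which is nilpotent: each basis element of $F(m)/\mathrm{P}_0^q F(m)$ is annihilated by $\mathrm{P}_0^q$ (it shifts every exponent up by $q$, landing in $\mathrm{P}_0^q F(m)$), and the multiplicativity $\mathrm{P}_0(x \otimes y) = \mathrm{P}_0(x) \otimes \mathrm{P}_0(y)$ carries this annihilation across the tensor product.

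The core computation will be the Milnor coaction on $w$. Combining $\lambda(u) = \sum_{i \geq 0} u^{p^i} \otimes \xi_i$, $\lambda(u^{p^q}) = \sum_{i \geq 0} u^{p^{q+i}} \otimes \xi_i^{p^q}$ and the tensor-product rule recalled in the paper yields
\[
\lambda(w) \;=\; \sum_{R,\, S} \bigl( u^{\otimes n}_R \otimes (u^{p^q})^{\otimes m}_S \bigr) \otimes \xi^{R + p^q S},
\]
where $R = (r_1, r_2, \ldots)$ and $S = (s_1, s_2, \ldots)$ run over multi-indices of nonnegative integers; here $u^{\otimes n}_R$ is the orbit-sum basis vector of $F(n)$ with $n - |R|$ copies of $u$ and $r_k$ copies of $u^{p^k}$ for each $k \geq 1$ (defined precisely when $|R| \leq n$), and $(u^{p^q})^{\otimes m}_S$ is the analogous basis vector of $\mathrm{P}_0^q F(m)$.

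The hypothesis $p^q > n$ enters at the decisive step. For any $R$ with $|R| \leq n$ every entry satisfies $r_i \leq |R| < p^q$, so the base-$p^q$ decomposition $t_i = r_i + p^q s_i$ with $0 \leq r_i < p^q$ of any multi-index $T$ is unique. Pairing $\lambda(w)$ with the Milnor basis element of $\ap$ dual to $\xi^T$ (a direct generalisation of the notation $m_n(r)$ of the paper) therefore isolates the single summand
\[
u^{\otimes n}_R \otimes (u^{p^q})^{\otimes m}_S
\]
for each $T = R + p^q S$ with $|R| \leq n$ and $|S| \leq m$. As $(R, S)$ varies over this range, these elements exhaust a basis of $F(n) \otimes \mathrm{P}_0^q F(m)$; this yields $\langle w\rangle \supseteq F(n) \otimes \mathrm{P}_0^q F(m)$. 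The reverse inclusion is immediate from Cartan, since any Steenrod operation applied to $w$ produces a sum of tensors in which the second factor, coming from $(u^{p^q})^{\otimes m}$, still has all its Frobenius heights at least $q$.

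The main obstacle will be the coaction bookkeeping, in particular recognising that the Milnor monomial $\xi^T$ factors uniquely as $\xi^R \cdot \xi^{p^q S}$ precisely because the bound $p^q > n$ forces the digits of $R$ to lie in $[0, p^q)$. Once that observation is secured, both containments fall out almost for free, and the nilpotency of the quotient reduces to the elementary fact that $\mathrm{P}_0$ distributes over the tensor product.
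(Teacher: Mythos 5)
Your argument is correct, but it follows a genuinely different (and more global) route than the paper's. The paper first uses the single family of Milnor elements $m_n(q)$ dual to $\xi_q^n$ to put $\mathrm{P}_0^q\left(\imath_n\otimes\theta\imath_m\right)$ into $\ap\left(\imath_n\otimes\mathrm{P}_0^q\imath_m\right)$ for every $\theta$, and then runs an induction over admissible monomials applied to the \emph{first} factor, via the Cartan formula, to reach general elements $\theta\imath_n\otimes\delta\imath_m$. You instead pair $\lambda(w)$ against the duals of arbitrary monomials $\xi^T$ and exploit the uniqueness of the base-$p^q$ digit decomposition $T=R+p^qS$ with $|R|\le n<p^q$ to compute the cyclic submodule $\ap w$ in one stroke: it equals $F(n)\otimes\mathrm{P}_0^qF(m)$, so the cokernel is $F(n)\otimes\left(F(m)/\mathrm{P}_0^qF(m)\right)$, which is visibly nilpotent. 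The mechanism is the same as in Lemma \ref{mil1} and Proposition \ref{Stee} of the paper --- the hypothesis $p^q>n$ lets a Milnor operation act on the two tensor factors independently --- but your version avoids the induction entirely and proves strictly more, namely an explicit identification of the submodule generated by the mod-nil generator and of its nilpotent cokernel. Two small points deserve a sentence in a final write-up: the coefficient of $\xi^R$ in $\lambda\left(u^{\otimes n}\right)$ is the orbit sum with each distinct tensor occurring exactly once (no multinomial coefficients appear, which is what makes these coefficients range over a basis of $F(n)=\left(F(1)^{\otimes n}\right)^{\mathfrak{S}_n}$ as $|R|\le n$ varies); and $\mathrm{P}_0$ is additive on each homogeneous component, so annihilation of the basis classes of $F(m)/\mathrm{P}_0^qF(m)$ by $\mathrm{P}_0^q$ really does yield nilpotence of arbitrary homogeneous elements of the tensor product.
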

\begin{proof}
Let $q$ be a number such that $p^q>n$ and denote $\imath_n\otimes \mathrm{P}_0^q \imath_m$ by
$\alpha$. We prove that $\alpha$ is a mod-nil generator of the tensor product $F(n)\otimes F(m)$. In
other words, for each $\gamma,\beta\in\ap$ we show that there exists a number
$N_{\gamma,\beta}$ such that $\mathrm{P}_0^{N_{\gamma,\beta}}(\gamma\imath_n\otimes\beta \imath_m)$
belongs to the submodule of $F(n)\otimes F(m)$ generated by $\alpha$.

The inequality $p^{q}>n$ allows to apply Lemma \ref{mil1}: $$m_{n}^{q}\left(\imath_n\otimes
\mathrm{P}_0^q
\left(\theta \imath_m\right)\right)=\mathrm{P}_0^q\left(\imath_n\otimes \theta\imath_m\right)$$
for every Steenrod operation $\theta$. Express $\theta$ as a sum of admissible monomials
$\ps^{i_{1}}\ldots \ps^{i_{k}}$ and denote by $\theta_{0}$ the sum of $\ps^{p^{q}i_{1}}\ldots
\ps^{p^{q}i_{k}}$. As $p^q>n$ then  $$m_{n}^{q}\left(\imath_n\otimes \mathrm{P}_0^q \left(\theta
\imath_m\right)\right)=m_{n}^{q}\theta_{0}\left(\alpha\right).$$ Hence
$\mathrm{P}_0^q\left(\imath_n\otimes \theta\imath_m\right)$ belongs to the submodule
$\ap\left(\alpha\right)$.

It remains to show that for each admissible monomial  $\theta$, there exists $N\in\n$ such that
$\mathrm{P}_0^N\left(\theta \imath_n\otimes \delta \imath_m\right)$ belongs to $\ap
	(\alpha)$ for all Steenrod operation $\delta$. We make an induction on the degree of
$\theta$. Since the case of degree $0$ is verified above we may proceed to the induction argument by
supposing that the statement holds for all admissible monomial $\theta$ of degree less than $k$. Let
$\ps^I$ be an admissible monomial of degree $k$. For technical reason, its first term $\ps^{l}$ is
written separately: $\ps^{I}=\ps^l\omega$. By the Cartan formula we have
	$$
	\ps^l\omega \imath_n\otimes\delta \imath_m
	=\ps^l(\omega \imath_n\otimes\delta \imath_m)
	-\sum_{i=0}^{l-1}\ps^{i}\omega \imath_n\otimes
	\ps^{(l-i)}\delta \imath_m.
	$$
	By induction hypothesis, there exists $N$ such that $\mathrm{P}_0^N(\ps^l\omega
	\imath_n\otimes\delta \imath_m)$ belongs to $\ap (\alpha)$ for all Steenrod operation 
	$\delta$. This concludes the lemma. 
\end{proof}
These two results yield:
\begin{cor}
For all sequence of natural numbers $\lambda=(\lambda_1,\ldots,\lambda_n)$, the module $F(\lambda)$
is mod-nil monogeneous. More precisely, if $\left\{\alpha_i,n\geq i\geq 2 \right\}$ are $n-1$
integers such that
$\alpha_{k}>\lambda_{1}+p^{\alpha_{2}}\lambda_{2}+\cdots+p^{\alpha_{k-1}}\lambda_{k-1}$ then the
element  $\imath_{\lambda_1}\otimes \mathrm{P}_0^{\alpha_2}\imath_{\lambda_2}\otimes\cdots\otimes
\mathrm{P}_0^{\alpha_{n}}\imath_{\lambda_n}$ is a mod-nil generator of $F(\lambda)$.
\end{cor}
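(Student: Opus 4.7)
The argument is a straightforward induction on the length $n$ of the sequence $\lambda$, attaching one factor at a time by means of Lemma~\ref{modnil57}. The base case $n=1$ is trivial: $F(\lambda_1)$ is freely generated by $\imath_{\lambda_1}$, so it is mod-nil monogenous (with no $\alpha_i$ to impose).

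Suppose the statement holds for sequences of length $n-1$. Writing $\mu=(\lambda_1,\ldots,\lambda_{n-1})$ and $d:=\lambda_1+p^{\alpha_2}\lambda_2+\cdots+p^{\alpha_{n-1}}\lambda_{n-1}$, I would first observe that $\alpha_2,\ldots,\alpha_{n-1}$ satisfy exactly the hypothesis required for $\mu$. The induction hypothesis then yields a mod-nil epimorphism $\varphi\colon F(d)\to F(\mu)$ sending $\imath_d$ to
$$\imath_\mu:=\imath_{\lambda_1}\otimes \mathrm{P}_0^{\alpha_2}\imath_{\lambda_2}\otimes\cdots\otimes \mathrm{P}_0^{\alpha_{n-1}}\imath_{\lambda_{n-1}}.$$
Since $\nil$ is a tensor ideal of $\u$, tensoring $\varphi$ with $\mathrm{id}_{F(\lambda_n)}$ produces a map $\varphi\otimes\mathrm{id}\colon F(d)\otimes F(\lambda_n)\to F(\mu)\otimes F(\lambda_n)=F(\lambda)$ whose cokernel is a quotient of (the cokernel of $\varphi$)$\otimes F(\lambda_n)$, hence nilpotent; so $\varphi\otimes\mathrm{id}$ is again mod-nil epi.

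To finish, I would apply Lemma~\ref{modnil57} to $F(d)\otimes F(\lambda_n)$. The assumption $\alpha_n>d$ forces $p^{\alpha_n}>d$ (since $p^{k}>k$ for every prime $p$ and every $k\geq 1$), which is exactly the degree condition demanded by that lemma; hence $\imath_d\otimes \mathrm{P}_0^{\alpha_n}\imath_{\lambda_n}$ is a mod-nil generator of $F(d)\otimes F(\lambda_n)$. Composing this with $\varphi\otimes\mathrm{id}$ yields a mod-nil epimorphism from $F(d+p^{\alpha_n}\lambda_n)$ to $F(\lambda)$ sending the fundamental class to $\imath_\mu\otimes \mathrm{P}_0^{\alpha_n}\imath_{\lambda_n}$, as wanted. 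No substantive obstacle is anticipated; the only points worth stating explicitly are that compositions of mod-nil epimorphisms are mod-nil (Serre property of $\nil$) and that tensoring with an arbitrary unstable module preserves mod-nil epimorphisms.
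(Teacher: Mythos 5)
Your proposal is correct and follows essentially the route the paper intends: the paper simply asserts the corollary follows from Lemmas \ref{modnilgeneral} and \ref{modnil57}, and your induction — lifting the mod-nil generator of $F(\lambda_1,\ldots,\lambda_{n-1})$ to a mod-nil epimorphism from $F(d)$, tensoring with $F(\lambda_n)$ (using that $\nil$ is a Serre class stable under tensoring), and applying Lemma \ref{modnil57} with $p^{\alpha_n}>\alpha_n>d$ — is exactly the way those two results combine. No gap.
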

The use of mod-nil generators goes back \cite{FS90}:
\begin{pro}
The element $u\otimes u^{p}\otimes\cdots\otimes u^{p^{n-1}}$ is a mod-nil generator of the module
$F(1)^{\otimes n}.$ The images of this element by the canonical projections $\bigotimes^n\to S^n$
and $\bigotimes^n\to \Lambda^n$ are therefore mod-nil generators of the modules $S^n(F(1))$ and
$\Lambda^n(F(1))$ respectively.
\end{pro}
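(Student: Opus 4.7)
The plan is to prove the first (tensor) claim by induction on $n$, then deduce the statements for $S^n$ and $\Lambda^n$ by pushing the result forward along the canonical projections. The base case $n=1$ is immediate, the identity of $F(1)$ being itself a mod-nil epimorphism sending $\imath_1$ to $u$. For the inductive step, assume $\alpha_{n-1}:=u\otimes u^p\otimes\cdots\otimes u^{p^{n-2}}$ is a mod-nil generator of $F(1)^{\otimes(n-1)}$, realised by a mod-nil epimorphism $\phi_{n-1}\colon F(d_{n-1})\to F(1)^{\otimes(n-1)}$, where $d_{n-1}=1+p+\cdots+p^{n-2}$.

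Since tensor products respect mod-nil epimorphisms (as recalled just before Lemma \ref{modnil57}), the map $\phi_{n-1}\otimes\mathrm{id}_{F(1)}\colon F(d_{n-1})\otimes F(1)\to F(1)^{\otimes n}$ is again mod-nil epi. I would then apply Lemma \ref{modnil57} to $F(d_{n-1})\otimes F(1)$ with exponent $q=n-1$, obtaining a mod-nil generator $\imath_{d_{n-1}}\otimes\mathrm{P}_0^{n-1}\imath_1$ of this tensor product, provided the key inequality $p^{n-1}>d_{n-1}$ holds. This inequality is valid for every prime $p\ge 2$; the tight case is $p=2$, where $d_{n-1}=2^{n-1}-1<2^{n-1}$. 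Composing the mod-nil epimorphism $F(d_n)\to F(d_{n-1})\otimes F(1)$ so produced with $\phi_{n-1}\otimes\mathrm{id}_{F(1)}$, and recalling $\mathrm{P}_0^{n-1}\imath_1=u^{p^{n-1}}$, carries the tautological generator of $F(d_n)$ to $\alpha_{n-1}\otimes u^{p^{n-1}}=\alpha_n$, closing the induction.

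For the second assertion, the canonical projections $\pi_S\colon F(1)^{\otimes n}\twoheadrightarrow S^n(F(1))$ and $\pi_\Lambda\colon F(1)^{\otimes n}\twoheadrightarrow\Lambda^n(F(1))$ are surjective morphisms of unstable modules, so their composites with the mod-nil epimorphism $F(d_n)\to F(1)^{\otimes n}$ just built remain mod-nil epi---the new cokernels are quotients of the original one and hence still nilpotent. The desired mod-nil generators of $S^n(F(1))$ and $\Lambda^n(F(1))$ are then the images $\pi_S(\alpha_n)$ and $\pi_\Lambda(\alpha_n)$.

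The only subtlety I anticipate is the following. Specialising the preceding Corollary naively to $\lambda=(1,\ldots,1)$ imposes the linear bound $\alpha_k>d_{k-1}$ on the exponents, which fails at the prescribed choice $\alpha_k=k-1$ dictated by the shape of $\alpha_n$. One therefore cannot black-box that Corollary; the induction has to be driven directly by Lemma \ref{modnil57} in order to exploit its sharper exponential bound $p^{\alpha_k}>d_{k-1}$, which is the combinatorial heart of the statement.
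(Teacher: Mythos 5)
Your proof is correct and follows the route the paper intends: the Proposition is stated without proof (credited to \cite{FS90}), but it is exactly the iterated application of Lemma \ref{modnil57} that also underlies the preceding Corollary, and your induction with $q=n-1$ together with the check $p^{n-1}>1+p+\cdots+p^{n-2}$ is the right way to carry it out. Your closing observation is also well taken: the Corollary's printed hypothesis $\alpha_k>\lambda_1+p^{\alpha_2}\lambda_2+\cdots+p^{\alpha_{k-1}}\lambda_{k-1}$ is evidently a misprint for $p^{\alpha_k}>\lambda_1+p^{\alpha_2}\lambda_2+\cdots+p^{\alpha_{k-1}}\lambda_{k-1}$, which is what Lemma \ref{modnil57} actually requires and what your choice $\alpha_k=k-1$ satisfies.
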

As mentioned before in the introduction, $S^{d,V}(F(1))$ is not $\nil-$closed in general. However it is reduced. This property allows to show that each morphism $F(\lambda)\to S^{d,V}(F(1))$ is determined by the image of a mod-nil generator of $F(\lambda)$ in $S^{d,V}(F(1))$.

\begin{defi}
If $M\in\u$ is reduced and if  $\mathrm{P}_0^n(x)=z$, we define $\sqrt[p^{n}]{z}=x$.
\end{defi}
\begin{lem}
Let $\alpha$ be a mod-nil generator of an unstable mod-nil monogeneous  module  $M$. If $N$ is a  reduced  unstable module then all morphisms from $M$ to $N$ are determined by the image of  $\alpha$ in $N$.
\end{lem}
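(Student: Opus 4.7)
The plan is quite direct: reduce the statement to the defining properties of a mod-nil generator and of a reduced module.

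First I unpack the hypothesis. Saying that $\alpha \in M$ is a mod-nil generator of $M$ means there is a mod-nil epimorphism $f : F(n) \to M$ sending the canonical generator $\imath_{n}$ to $\alpha$. Let $C := \cke{f}$; by definition $C$ is nilpotent. Note that the image $\im{f}$ sits in the exact sequence $F(n) \twoheadrightarrow \im{f} \hookrightarrow M \twoheadrightarrow C$, so $M / \im{f} \cong C$ is nilpotent.

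Next, given two morphisms $g, h : M \to N$ with $g(\alpha) = h(\alpha)$, I consider the difference $\varphi := g - h : M \to N$, which vanishes on $\alpha$. I first claim that $\varphi \circ f : F(n) \to N$ is zero. Indeed, $F(n)$ is the free unstable module on $\imath_{n}$, so a morphism out of $F(n)$ is determined by the image of $\imath_{n}$; here that image is $\varphi(\alpha) = 0$, so $\varphi \circ f = 0$. Consequently $\varphi$ vanishes on the submodule $\im{f}$ of $M$, and hence factors through the quotient $M / \im{f} \cong C$, giving a morphism $\bar{\varphi} : C \to N$.

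Finally I invoke the reducedness of $N$. Since $C$ is nilpotent and $N$ is reduced, the group $\ho{\u}{C}{N}$ is trivial by definition, so $\bar{\varphi} = 0$ and thus $\varphi = g - h = 0$. This shows $g = h$, completing the argument. There is no real obstacle here beyond correctly assembling these standard facts; the only mildly subtle point is remembering that a mod-nil epimorphism $f : F(n) \to M$ need not be surjective, but this causes no trouble because $M / \im{f}$ is still a subquotient of the nilpotent module $\cke{f}$, hence nilpotent itself.
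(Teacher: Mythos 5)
Your proof is correct, but it takes a slightly different route from the paper's. The paper argues element by element and constructively: for any $x\in M$, nilpotency of $\cke{f}$ gives $n$ and $\theta\in\ap$ with $\mathrm{P}_0^n(x)=\theta(\alpha)$, whence $\mathrm{P}_0^n(f(x))=\theta f(\alpha)$, and reducedness of $N$ is used through the injectivity of $\mathrm{P}_0$ on $N$ to write $f(x)=\sqrt[p^n]{\theta f(\alpha)}$ --- i.e.\ the paper actually exhibits the formula recovering $f(x)$ from $f(\alpha)$. You instead prove uniqueness categorically: the difference $\varphi=g-h$ of two morphisms agreeing on $\alpha$ kills $\imath_n$, hence kills $\im{f}$ by freeness of $F(n)$, hence factors through the nilpotent cokernel $\cke{f}$, and then $\ho{\u}{\cke{f}}{N}=0$ by the very definition of reduced. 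Both arguments rest on the same two ingredients (nilpotent cokernel plus reducedness of the target), but yours uses the Hom-vanishing definition of reduced directly and is cleaner as a uniqueness statement, while the paper's version has the minor advantage of making the determination explicit, which is in the spirit of how the lemma is later applied. One small imprecision in your write-up: $M/\im{f}$ is not merely a subquotient of $\cke{f}$, it \emph{is} $\cke{f}$; this does not affect the argument.
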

\begin{proof}
Let $f$ be a morphism from $M$ to $N$ and let $x$ be an arbitrary element of  $M$. There are a
natural number $n$ and a Steenrod operation $\theta$ such that  $\mathrm{P}_0^n(x)=\theta(\alpha)$.
It follows that 
\begin{align*}
\mathrm{P}_0^n(f(x))&=f(\mathrm{P}_0^n(x))\\
&=f(\theta(\alpha))\\
&=\theta f(\alpha).
\end{align*}
 As $N$ is reduced then $f(x)=\sqrt[p^n]{\theta f(\alpha)}$.
\end{proof}

We are thus led to the problem of determining the subspace of $S^{d,V}(F(1))$ of all possible images of a mod-nil generator of $F(\lambda)$. Lemma \ref{prin2} presents the desired determination. Before formulating this lemma, we fix the following notation:
\begin{notat}
Let $\alpha=\left(\alpha_1,\alpha_2,\ldots,\alpha_t\right)$, we denote:
$$\omega_{\alpha} = \mathrm{P}_0^{\alpha_1}\imath_{\lambda_1} \otimes \mathrm{P}_0^{\alpha_2}
\imath_{\lambda_2} \otimes \cdots \otimes \mathrm{P}_0^{\alpha_t} \imath_{\lambda_t}.$$
\end{notat}
Let us present an example of interest. There is an isomorphism of $\fp-$vector space:
$$\ho{\u}{F(d)}{S^{d,V}(F(1))}\cong\left(S^{d,V}(F(1))\right)^{d}.$$ Therefore if $\varphi$ is a
morphism from $F(d)$ to $S^{d,V}(F(1))$ then the image $\varphi(\imath_{d})$ is a sum of elements of
the type 
$\prod_{i=1}^{d}s_{i}\otimes u$ where $s_{i}\in V^{\sharp}$. The natural transformation
$\ten{j=1}{k}{S^{\lambda_j,V}}\to S^{d,V}$ induces a morphism 
$$\rho:\ten{j=1}{k}{\ho{\u}{F(\lambda_j)}{S^{\lambda_j,V}(F(1))}}\to\ho{\u}{F(\lambda)}{S^{d,V}
(F(1))}.$$ For $1\leq i\leq t$, let $f_{i}$ be a morphism from $F(\lambda_i)$ to
$S^{\lambda_i,V}(F(1))$. Then the image of $\omega_{\alpha}$ by
$\rho\left(\bigotimes_{i=1}^{t}f_{i}\right)$ is a sum of elements of simple types 
$$\prod_{k=1}^{t}\left(\prod_{i=1}^{\lambda_{k}}s_{i_{k}}\otimes u^{p^{\alpha_{k}}}\right)$$
with  $s_{i_{k}}\in V^{\sharp}$. We show that every morphism in $\ho{\u}{F(\lambda)}{S^{d,V}(F(1))}$ is of this simple form.
\begin{lem}\label{prin2}
Let $\lambda=(\lambda_{1},\ldots,\lambda_{t})$ be a sequence of integers whose sum is $d$ and
$\delta$ be a number such that $\delta>\max\left\{\lambda_i\right\}$. Denote by $\alpha$ the
sequence $(0,\delta,2\delta,\ldots,(t-1)\delta)$. Then the element $\omega_\alpha$ is a mod-nil
generator of $F(\lambda)$. Moreover if $\varphi$ is a morphism from $F(\lambda)$ to $S^{d,V}(F(1))$
then $\varphi(\omega_\alpha )$ is a sum of elements of type
$$\prod_{k=0}^{t-1}\left(\prod_{i=1}^{\lambda_{k+1}}s_{i_{k}}\otimes u^{p^{k\delta}}\right)$$
where $s_{i_{k}}\in V^{\sharp}$.
\end{lem}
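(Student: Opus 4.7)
The lemma has two parts. For the first, that $\omega_\alpha$ is a mod-nil generator of $F(\lambda)$, I would directly apply the preceding Corollary with $\alpha_k=(k-1)\delta$. The required inequality $p^{(k-1)\delta}>\lambda_1+p^\delta\lambda_2+\cdots+p^{(k-2)\delta}\lambda_{k-1}$ reduces, thanks to $\delta>\max\{\lambda_i\}$ (so each $\lambda_i<p^\delta$), to a geometric-series bound: the right-hand side is at most $\max\{\lambda_i\}\cdot\tfrac{p^{(k-1)\delta}-1}{p^\delta-1}<p^{(k-1)\delta}$.

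For the second part, I would expand $\varphi(\omega_\alpha)=\sum_I c_I m_I$ in the monomial basis of $S^d(V^\sharp\otimes F(1))^N$, where $N=\sum_{k=0}^{t-1}p^{k\delta}\lambda_{k+1}$ is the degree of $\omega_\alpha$ and $m_I=\prod_{j=1}^d s_{I,j}\otimes u^{p^{e_{I,j}}}$, and attach to each $m_I$ its exponent profile $(n_{e,I})_{e\geq 0}$ counting how many $e_{I,j}$ equal $e$. The goal is $c_I=0$ unless $n_{k\delta,I}=\lambda_{k+1}$ for $k=0,\ldots,t-1$ and $n_{e,I}=0$ for every other $e$. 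I would induct on $t$. The base case $t=1$ is immediate: $\ho{\u}{F(d)}{S^{d,V}(F(1))}=S^{d,V}(F(1))^d$ consists exactly of sums of products of $d$ degree-one factors $s\otimes u$, which is precisely the target form.

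For the inductive step I would write $\omega_\alpha=\imath_{\lambda_1}\otimes\mathrm{P}_0^\delta\omega_{\alpha'}$, with $\omega_{\alpha'}$ the mod-nil generator for $\lambda'=(\lambda_2,\ldots,\lambda_t)$ (same $\delta$). The key tool is Proposition \ref{Stee}: since $p^\delta>\lambda_1$, the Milnor operation $m_{\lambda_1}(r)$ acts on $\omega_\alpha$ factor-by-factor, giving $m_{\lambda_1}(r)(\omega_\alpha)=\mathrm{P}_0^r\imath_{\lambda_1}\otimes\mathrm{P}_0^\delta\omega_{\alpha'}$. Imposing $m_{\lambda_1}(r)\varphi(\omega_\alpha)=\varphi(\mathrm{P}_0^r\imath_{\lambda_1}\otimes\mathrm{P}_0^\delta\omega_{\alpha'})$ for every $r$, and using reducedness of $S^{d,V}(F(1))$ to take $p^r$-th roots, one extracts the component of $\varphi(\omega_\alpha)$ corresponding to the first $\lambda_1$ slots being of the form $s\otimes u$; the inductive hypothesis applied to the remaining factors then yields the desired profile at the higher levels $k\delta$, $k\geq 1$.

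The main obstacle is the combinatorial verification that every monomial with a deviant exponent profile carries zero coefficient. Concretely, given such a deviant $m_I$, one must exhibit a Milnor operation whose value on $m_I$ is inconsistent with the factorwise formula for $\omega_\alpha$. The hypothesis $\delta>\max\{\lambda_i\}$ is essential here: it forces $(\lambda_1,\lambda_2,\ldots,\lambda_t)$ to be the unique base-$p^\delta$ expansion of $N$ with digits bounded above by $\max\{\lambda_i\}<p^\delta$, and this uniqueness is precisely what makes the separating Milnor operation available.
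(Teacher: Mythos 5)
Your first paragraph is fine: the geometric-series bound $\lambda_1+p^{\delta}\lambda_2+\cdots+p^{(k-2)\delta}\lambda_{k-1}\leq(p^{\delta}-1)\frac{p^{(k-1)\delta}-1}{p^{\delta}-1}<p^{(k-1)\delta}$ does verify the hypothesis of the Corollary, and this matches what the paper does implicitly. The base case $t=1$ of your induction is also correct, since $d$ ones is the only way to write $d$ as a sum of $d$ powers of $p$.

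The second part, however, has a genuine gap: you name the ``main obstacle'' --- showing that every monomial with a deviant exponent profile has zero coefficient --- and then do not overcome it. The one justification you offer, uniqueness of the base-$p^{\delta}$ expansion of $N=\sum_k p^{k\delta}\lambda_{k+1}$, does not rule out deviant monomials, because a monomial of $S^d(V^{\sharp}\otimes F(1))$ in degree $N$ is constrained only to be a product of $d$ factors whose $p$-power degrees sum to $N$, and such decompositions need not respect the base-$p^{\delta}$ digits. Concretely, for $p=2$, $\lambda=(2,1)$, $\delta=3$, one has $N=10=1+1+8=2+4+4$: the monomial $(s_1\otimes u^2)(s_2\otimes u^4)(s_3\otimes u^4)$ has the right degree and the right number of factors but the wrong profile, and nothing in your argument excludes it. The paper needs two separate inputs to close this: first a combinatorial lemma (its Lemmas 5.1--5.3) showing that \emph{any} decomposition of $N$ into $d$ powers of $p$ partitions into consecutive blocks $E_i$ with $\sum_{h\in E_i}p^{l_h}=p^{(i-1)\delta}\lambda_i$ and $\mathrm{card}(E_1)\leq\lambda_1$ (this already requires a strengthening of the hypothesis on $\delta$ involving $p$-adic lengths that you never impose); and second, the Milnor-operation step, applying $m_{\lambda_1+\cdots+\lambda_i}(t\delta)$ together with Proposition 2.2 and reducedness to force $l_h\geq i\delta$ for $h\in E_{i+1}$, which is what kills $2+4+4$ in the example above and yields $\mathrm{card}(E_{i+1})\leq\lambda_{i+1}$, hence equality everywhere. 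Note also that the combinatorial lemma is load-bearing even for your Milnor step: to compute $m_{\lambda_1}(r)$ on a monomial via the factorwise formula of Proposition 2.2 you must already know that the factors outside the first block have degrees $p^{l_h}>\lambda_1$, which is exactly what the combinatorics establishes. Your outline has the right tools and the right block-by-block shape, but as written it asserts rather than proves the decisive step.
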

The morphism $\rho$ is clearly injective. Lemma \ref{prin2} show that this morphisms is surjective as well and hence it is bijective. Since $$\ten{j=1}{k}{\ho{\u}{F(\lambda_j)}{S^{\lambda_j}(F(1)\otimes V^{\sharp})}}\cong\ten{j=1}{k}{S^{\lambda_j}(V^{\sharp})},$$ Lemma \ref{key12} is a corollary of Lemma \ref{prin2}.
\section{Proof of the key lemma}\label{theproof}
As discussed in previous section, we are left with Lemma \ref{prin2}. To deal with this lemma we first investigate some combinatorial lemmas.

\subsection{A combinatorial process}\label{combpro}
A morphism $\varphi$ in $\ho{\u}{F(\lambda)}{S^{d,V}(F(1))}$ is a graded morphism. Then an easy observation on the degree of the image of a mod-nil generator of $F(\lambda)$ leads us to several combinatorial lemmas of this paragraph.
\begin{lem}\label{lp}
Let $n$ be a number equal to a sum of $p-$powers $\sum_{i=0}^{q}p^{l_i}$ where $l_0\leq
l_1\leq\ldots\leq l_q$. Let $\sum_{i=1}^{k}\lambda_{i}p^{m_i}$ be the $p-$decomposition of $n$
with $m_1<m_2<\ldots<m_k$. Then there exists a partition
$S_1,S_2,\ldots,S_k$ of $\left\{l_0,l_1,\ldots,l_q\right\}$  such that $\som{j\in
S_i}{}{p^{l_j}}=\lambda_{i}p^{m_i}$. Therefore $q+1\geq k$.
\end{lem}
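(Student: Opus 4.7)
The plan is to argue by strong induction on the number of summands $q+1$, exploiting the uniqueness of the $p$-adic expansion of $n$. For each non-negative integer $e$, I would introduce the multiplicity $c_e := \#\{j : l_j = e\}$, so that $n = \sum_{e} c_e p^e$. The base case $q+1 = 1$ is trivial: then $n = p^{l_0}$ forces $k = 1$, $\lambda_1 = 1$, $m_1 = l_0$, and $S_1 = \{0\}$ works.

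For the inductive step I would split into two cases. If $c_e < p$ for every $e$, then $n = \sum_{e} c_e p^e$ is itself the $p$-adic expansion, so uniqueness forces $c_{m_i} = \lambda_i$ and $c_e = 0$ outside $\{m_1, \ldots, m_k\}$; the obvious partition $S_i := \{j : l_j = m_i\}$ then works. Otherwise I would pick the smallest $e$ with $c_e \geq p$, choose $p$ indices $j_1, \ldots, j_p$ with common value $l_{j_s} = e$, and form a shorter list by deleting those indices and appending one single new term equal to $p^{e+1}$. This new list has $q + 2 - p \leq q$ entries and still sums to $n$, so the induction hypothesis produces blocks $S_1', \ldots, S_k'$ summing respectively to $\lambda_i p^{m_i}$. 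In the unique block containing the new index, I would swap that index for $\{j_1, \ldots, j_p\}$ and leave the other blocks untouched; since $p \cdot p^e = p^{e+1}$, the block sum is preserved and we recover the desired partition. The final inequality $q+1 \geq k$ is then immediate from the nonemptiness of each $S_i$.

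The step I expect to require the most care is the reduction in the second case: verifying that the induction measure strictly decreases (which holds because $p \geq 2$ ensures at least one summand is removed) and that the unfolding step really produces a partition of $\{0, 1, \ldots, q\}$ rather than of some auxiliary set. Beyond that, the argument is pure bookkeeping. The conceptual content is simply that the carrying rule in base $p$ is the only mechanism by which an arbitrary sum of $p$-powers can be collapsed to canonical form, and this is exactly what the uniqueness of the $p$-adic expansion encodes.
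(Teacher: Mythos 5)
Your argument is correct. It differs from the paper's in the choice of induction: the paper inducts on $m_k$, the top exponent of the canonical $p$-adic expansion, peeling off the units digit (collecting $\lambda_1$ of the unit summands into $S_1$, grouping the remaining units --- whose number is divisible by $p$ --- into carries) and then dividing everything by $p$ to reduce the top exponent; you instead induct on the number of summands $q+1$ and perform one carry at a time, merging $p$ equal powers $p^e$ into a single $p^{e+1}$ and then unfolding the resulting block of the partition. Both arguments rest on the same fact, that base-$p$ carrying together with uniqueness of the $p$-adic expansion is the only way a sum of $p$-powers collapses to canonical form. Your version has a cleaner termination measure (the list length strictly decreases since $p\geq 2$) and avoids the paper's bookkeeping with the auxiliary sets $a(q)$, $T_i$, $S'_i$, $T'_i$; the only care needed on your side is re-indexing the shortened list so the induction hypothesis applies literally, and noting that the new list is nonempty because $q+1\geq c_e\geq p$. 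The final inequality $q+1\geq k$ follows in both treatments from the nonemptiness of each block, since $\lambda_i p^{m_i}>0$.
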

\begin{proof}
We proceed by induction on $m_k$. The lemma is trivial for the case $m_k=1$. Suppose that the lemma is verified for all $m_k<n$ we prove it for the case $m_k=n$.
\begin{enumerate}
\item If $m_1,l_0>0$, then by dividing both sides of the equality 
$\som{i=1}{k}{\lambda_{i}p^{m_i}}=\som{i=1}{q}{p^{l_i}}$ by $p$ we return to the case $m_k=n-1$.
\item If $m_1=0$, let $a(q)$ be the index that
$$0=l_0=\ldots=l_{a(q)}<l_{a(q)+1}.$$
Because $n\equiv \lambda_{1}(\textup{ modulo }p)$ then
$a(q)-\lambda_{1}$ is divisible by $p$. Therefore
\begin{equation}
\label{lq}\som{i=2}{k}{\lambda_{i}p^{m_i}}=p\left(
p^{l_{\lambda_{1}+1}}+p^{l_{p\lambda_{1}+1}}+\ldots+p^{l_{a(q)-\lambda_{1}-p+1}}
\right)+\som{i=a(q)+1}{q}{p^{l_i}}.
\end{equation}
By dividing both side of
\ref{lq} by $p$ we have:
$$\som{i=2}{k}{\lambda_{i}p^{m_i-1}}=p^{l_{\lambda_{1}+1}}+p^{l_{p\lambda_{1}+1}}+\ldots+p^{l_{
a(q)-\lambda_{1}-p+1}}+\som{i=a(q)+1}{q}{p^{l_i-1}} .$$ According to induction hypothesis, we have a
partition of index sets
$$\left\{l_{\lambda_{1}+1},l_{p\lambda_{1}+1},\ldots,l_{a(q)-\lambda_{1}-p+1}\right\}\cup\left\{l_{
a(q)+1}-1, \ldots,l_{q} -1\right\}$$ into $k-1$ subsets $T^{}_2,\ldots,T^{}_{k}$ such that
$$\som{j\in T^{}_i}{}{p^{j}}=\lambda_{i}p^{m_i-1}.$$ For each $i\geq 2$ we denote:
\begin{align*}
S^{'}_i&=T_i\cap
\left\{l_{\lambda_{1}+1},l_{p\lambda_{1}+1},\ldots,l_{a(q)-\lambda_{1}-p+1}\right\},\\
T^{'}_i&=T_i\cap \left\{l_{a(q)+1}-1,\ldots,l_{q}-1\right\}.
\end{align*}
We write $S_1=\left\{l_0,l_{1},\ldots,l_{\lambda_{1}}\right\}$ and
\begin{align*}
S_i&=S^{'}_i\cup\left\{l_{pj\lambda_{1}+2},l_{pj\lambda_{1}+3},\ldots,l_{pj\lambda_{1}+p}\left|l_{
pj\lambda_{1}+1}\in S^{'}_i \right.\right\} \cup\left\{l+1\left|l\in T^{'}_i\right.\right\},2\leq
i\leq k,
\end{align*} 
then $S_1,S_2,\ldots,S_k$ is a partition for the case $m_k=n$.  
\end{enumerate} 
\end{proof}
As a special case of Lemma \ref{lp}, we have:
\begin{cor}\label{decomposition}
Let $\left(a_1,\ldots,a_k\right)$ be a pairwise distinct sequence of positive natural numbers. We
suppose further that the $p-$adic decompositions of two distinct numbers of the sequence have no
common $p-$power. Then if the equality $\som{i=1}{q}{p^{l_i}}=\som{j=1}{k}{a_j}$ holds, there exists
a partition $S_1,\ldots,S_k$ of $\left\{1,\ldots,q\right\}$ such that for all $i=1,\ldots,k$ we have
$\som{j\in S_i}{}{p^{l_j}}=a_i$.
\end{cor}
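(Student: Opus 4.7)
The plan is to derive this directly from Lemma \ref{lp} by exploiting the disjointness hypothesis on the $p$-adic expansions of the $a_j$'s. The entire content of the statement is already contained in the preceding lemma; the hypothesis here simply allows the partition produced there to be regrouped into exactly $k$ blocks.

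First I would rewrite the data. Set $n=\sum_{j=1}^{k}a_j$ and write each $a_j$ in its $p$-adic expansion $a_j=\sum_{t\in E_j}c_{j,t}p^{t}$, where $E_j\subset\mathbb{N}$ is the set of indices $t$ at which the digit $c_{j,t}\in\{1,\ldots,p-1\}$ of $a_j$ is nonzero. The hypothesis that the $p$-adic decompositions of two distinct $a_j$'s share no common $p$-power means precisely that the sets $E_1,\ldots,E_k$ are pairwise disjoint. Consequently adding the $a_j$'s produces no carries, and the $p$-adic expansion of $n$ is obtained simply by concatenation: if $m_1<\cdots<m_K$ enumerates $E_1\cup\cdots\cup E_k$, then $n=\sum_{i=1}^{K}\lambda_i p^{m_i}$ where $\lambda_i=c_{j,m_i}$ for the unique $j=j(i)$ with $m_i\in E_{j(i)}$.

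Second I would apply Lemma \ref{lp} to the equality $\sum_{i=1}^{q}p^{l_i}=n$. This furnishes a partition $T_1,\ldots,T_K$ of $\{1,\ldots,q\}$ such that $\sum_{r\in T_i}p^{l_r}=\lambda_i p^{m_i}$ for every $i=1,\ldots,K$. Finally, using the map $i\mapsto j(i)$ defined above, I would regroup these blocks by setting
\[
S_j=\bigcup_{i\,:\,m_i\in E_j} T_i, \qquad j=1,\ldots,k.
\]
Since $E_1,\ldots,E_k$ partition $\{m_1,\ldots,m_K\}$, the $S_j$ form a partition of $\{1,\ldots,q\}$, and
\[
\sum_{r\in S_j}p^{l_r}=\sum_{i\,:\,m_i\in E_j}\lambda_i p^{m_i}=\sum_{t\in E_j}c_{j,t}p^{t}=a_j,
\]
which is the required conclusion.

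There is no real obstacle in this argument: the combinatorial depth has already been invested in Lemma \ref{lp}, and the carry-free hypothesis on the $a_j$'s is exactly the condition that lets the finer partition produced by that lemma be coarsened into one indexed by $j=1,\ldots,k$. The only point deserving a moment of care is checking that the sets $E_j$ really are pairwise disjoint — which is merely a restatement of the hypothesis — and that no $E_j$ is empty, which holds since each $a_j$ is a positive integer.
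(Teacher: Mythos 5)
Your argument is correct and is exactly the derivation the paper has in mind: the paper offers no proof, simply declaring the corollary ``a special case of Lemma \ref{lp}'', and your write-up supplies precisely the missing step --- the disjointness hypothesis makes the sum of the $a_j$ carry-free, so the $p$-adic decomposition of $n$ is the disjoint union of those of the $a_j$, and the partition from Lemma \ref{lp} coarsens accordingly. Nothing further is needed.
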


In order to prove Lemma \ref{prin2}, we need to determine the images in  $S^d(F(1)\otimes
V^{\sharp})$ of the element $$\omega_\alpha = \imath_{\lambda_1} \otimes \mathrm{P}_0^{\delta}
\imath_{\lambda_2} \otimes  \mathrm{P}_0^{2\delta} \imath_{\lambda_3} \otimes \cdots \otimes
\mathrm{P}_0^{(t-1)\delta} \imath_{\lambda_t}.$$ Because the degree of $\omega_{\alpha}$ is equal
to that of its image, we obtain an equality:
$$
\lambda_1 + p^{\delta}\lambda_2 +p^{2\delta}\lambda_3+\cdots +p^{(t-1)\delta}\lambda_t
=p^{l_1}+\cdots+p^{l_d}
.$$
The following lemma supplies a first determination of $l_i$ basing on this equality.
\begin{lem}\label{comb}
Let $\lambda=\left(\lambda_1,\ldots,\lambda_t\right)$ be a sequence of natural numbers such that the
sum is equal to $d$ and $\delta$ be a number such that $\delta>\max\left\{\lambda_i\right\}$ and
that the  $p-$adic length of $p^{\delta}-t$ is strictly greater than $d$ for $t\leq
dp^{\lambda_i}-\lambda_i$. We suppose furthermore that the following identity holds:
$$
\lambda_1 + p^{\delta}\lambda_2 +p^{2\delta}\lambda_3+\cdots +p^{(t-1)\delta}\lambda_t
=p^{l_1}+\cdots+p^{l_d},
$$
$l=\left(l_1,\ldots,l_d\right)$ denotes an ascending sequence. Then there exists a unique partition of $\{1,\ldots, d\}$ into $t$ subsets $E_i$ such that:
\begin{itemize}
\item For all $i$ we have 
$$
p^{(i-1)\delta} \lambda_i = \sum_{h \in E_i} p^{l_h};
$$
\item Each subset $E_i$ is a sequence of successive natural numbers $\{k,\ldots,k+r\}$;
\item $\mathrm{card}(E_1) \leq \lambda_1$.
\end{itemize}
\end{lem}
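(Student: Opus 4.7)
The plan is to apply Corollary~\ref{decomposition} to obtain a set partition matching the prescribed sums, then upgrade it to a partition by consecutive blocks using the sortedness of $(l_1,\ldots,l_d)$ together with the $p$-adic length hypothesis. Uniqueness and the bound $|E_1|\leq\lambda_1$ follow quickly afterwards.

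First I would verify that Corollary~\ref{decomposition} applies to the sequence $a_i=p^{(i-1)\delta}\lambda_i$. These are pairwise distinct, and since $\delta>\max\lambda_i$, the $p$-adic support of $\lambda_i$ lies in $\{0,\ldots,\delta-1\}$; after shifting by $(i-1)\delta$, the supports of the $a_i$ occupy the disjoint intervals $[(i-1)\delta,i\delta-1]$. The corollary then yields a partition $S_1,\ldots,S_t$ of $\{1,\ldots,d\}$ satisfying $\sum_{h\in S_i}p^{l_h}=p^{(i-1)\delta}\lambda_i$. The key remaining task is to arrange the $S_i$ to be consecutive in the sorted order, which I would prove by induction on $t$. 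The base case $t=1$ is forced: $\sum_{h=1}^{d}p^{l_h}=\lambda_1=d$ combined with $p^{l_h}\geq 1$ compels $l_h=0$ throughout, so $E_1=\{1,\ldots,d\}$. For the inductive step, the sub-claim is that $S_1$ can be chosen as $\{1,\ldots,|S_1|\}$. Since $\sum_{h\in S_1}p^{l_h}=\lambda_1<p^\delta$, every $h\in S_1$ has $l_h<\delta$, so $S_1\subseteq\{1,\ldots,m\}$ where $m=|\{h:l_h<\delta\}|$. If $S_1\ne\{1,\ldots,|S_1|\}$, the complement $T=\{1,\ldots,m\}\setminus S_1\subseteq\bigcup_{i\geq 2}S_i$ is nonempty, and divisibility implies $\sum_{h\in T}p^{l_h}=kp^\delta$ for some $k\geq 1$. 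The length hypothesis enters here: any representation of $kp^\delta$ as a sum of $|T|\leq d-|S_1|$ powers of $p$ each strictly less than $p^\delta$ requires, after tallying carries, a base-$p$ digit contribution exceeding $d$, which contradicts the bound on the $p$-adic length of $p^\delta-k'$ in the range prescribed by the hypothesis. Once $S_1=\{1,\ldots,|S_1|\}$ is secured, dividing the tail identity by $p^\delta$ and applying the inductive hypothesis to $(\lambda_2,\ldots,\lambda_t)$ with the shifted sequence $(l_{|S_1|+1}-\delta,\ldots,l_d-\delta)$ produces the remaining consecutive blocks $E_2,\ldots,E_t$.

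For uniqueness, two distinct consecutive partitions would first disagree on some block $E_i$, and the longer version would strictly contain the shorter as an initial prefix within the same range; the excess indices would then sum to zero, an impossibility. The bound $|E_1|\leq\lambda_1$ is immediate from $\sum_{h\in E_1}p^{l_h}=\lambda_1$ and $p^{l_h}\geq 1$. The main obstacle I foresee is making the carry-counting rigorous in the inductive step: one must show quantitatively that writing $kp^\delta$ using at most $d-|S_1|<d$ sub-$p^\delta$ summands conflicts with the length condition on $p^\delta-t$ for $t$ in the range $1\leq t\leq dp^{\lambda_i}-\lambda_i$, and identifying the precise value of $k'$ to invoke is likely the most delicate part.
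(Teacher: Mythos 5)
Your overall strategy (invoke Corollary~\ref{decomposition}, then argue the resulting sets can be taken to be consecutive blocks) is reasonable, but the central step of your induction is built on a false claim. You reduce to showing that the set $T=\{1,\ldots,m\}\setminus S_1$ of indices $h\notin S_1$ with $l_h<\delta$ must be empty, by asserting that $k p^{\delta}$ cannot be written as a sum of at most $d-|S_1|$ powers of $p$ each strictly less than $p^{\delta}$. This is not true, and $T$ genuinely can be nonempty under all the hypotheses of the lemma. Concretely, take $p=2$, $\lambda=(2,1)$, $d=3$, $\delta=8$: the length hypothesis holds (the binary digit sum of $2^{8}-s$ is at least $5>3$ for all $1\leq s\leq 10=d p^{\lambda_1}-\lambda_1$), and
$$
2+2^{8}\cdot 1 \;=\; 258 \;=\; 2^{1}+2^{7}+2^{7},
$$
so $l=(1,7,7)$, $S_1=\{1\}$, $T=\{2,3\}$, and $\sum_{h\in T}2^{l_h}=2^{8}=1\cdot p^{\delta}$ is a sum of exactly $2=d-|S_1|$ powers of $2$, each $<2^{8}$. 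No contradiction is available; indeed the conclusion of the lemma holds here with $E_1=\{1\}$, $E_2=\{2,3\}$, and $\mathrm{card}(E_2)=2>\lambda_2$. The moral is that indices with $l_h<\delta$ are not forced into $E_1$; they may legitimately populate $E_2,E_3,\ldots$, and the stronger statement you are implicitly trying to prove ($\mathrm{card}(E_i)=\lambda_i$, equivalently $l_h\geq\delta$ for $h\notin E_1$) is exactly what the paper says cannot be obtained by combinatorics alone and is deferred to the Steenrod-operation argument of Section~\ref{milnorop}.

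The repair is to compare the $p^{l_h}$ against $\lambda_1$ rather than against $p^{\delta}$, which is what the paper does: let $\tau$ be the largest index with $p^{l_\tau}\leq\lambda_1$ and show that $D=\sum_{i\leq\tau}p^{l_i}-\lambda_1$ vanishes. The point is that this $D$ satisfies $0\leq D\leq d\lambda_1-\lambda_1\leq dp^{\lambda_1}-\lambda_1$, i.e.\ it lies in the precise range where the hypothesis on the $p$-adic length of $p^{\delta}-t$ applies; if $D>0$ one gets that $\sum_{i>\tau}p^{l_i}=p^{\delta}(\lambda_2+\cdots)-D$ has $p$-adic length exceeding $d$, contradicting its expression as a sum of at most $d$ powers of $p$. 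In your version the quantity subtracted from $p^{\delta}$ is $\sum_{h\in T'}p^{l_h}$, which can be of order $p^{\delta}$ and falls outside the range covered by the hypothesis, so "identifying the precise value of $k'$" cannot be done. Your base case, the uniqueness argument, and the bound $\mathrm{card}(E_1)\leq\lambda_1$ are fine.
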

\begin{proof}
The condition $\delta>\max\left\{\lambda_i\right\}$ allows to apply Lemma \ref{decomposition}.

Let $\tau$ be the first index such that $p^{l_\tau} \leq \lambda_1 < p^{l_{\tau +1}}$. We show that
$\som{i=1}{\tau}{p^{l_i}}=\lambda_1$. 
The equality 
$$\left(\som{i=1}{\tau}{p^{l_i}}\right)-\lambda_1=p^{\delta}\left(\lambda_2
+p^{\delta}\lambda_3+\cdots +p^{(t-2)\delta}\lambda_t \right) - \som{i=\tau+1}{d}{p^{l_i}}$$
guarantees that $\left(\som{i=1}{\tau}{p^{l_i}}\right)-\lambda_1$ is divisible by $p^{\lambda_1}$.
It follows that $\left(\som{i=1}{\tau}{p^{l_i}}\right)\geq\lambda_1$. If the inequality is strict,
then $$\left(\som{i=1}{\tau}{p^{l_i}}\right)-\lambda_1\leq dp^{\lambda_1}-\lambda_1.$$ Hence the
$p-$adic length of the sum $$p^{\delta}\left(\lambda_2 +p^{\delta}\lambda_3+\cdots
+p^{(t-2)\delta}\lambda_t \right) - \left(\som{i=1}{\tau}{p^{l_i}}\right)+\lambda_1$$ is greater
than $d+1$. This contradicts the fact that $d$ is the upper bound of the $p-$adic length of
$\som{i=\tau+1}{d}{p^{l_i}}$. Hence $\som{i=1}{\tau}{p^{l_i}}=\lambda_1$. We can then choose
$E_1=\left\{1,2,\ldots,\tau \right\}$.
The rest of the lemma can be proved in the same manner by using Corollary \ref{decomposition}.

It follows from the equalities
$$
p^{(i-1)\delta} \lambda_i = \sum_{h \in E_i} p^{l_h}
$$
that $\mathrm{card}(E_i)\leq p^{(i-1)\delta} \lambda_i$ and in particular, $\mathrm{card}(E_1)\leq
\lambda_1$.
\end{proof}
It remains to prove that $\mathrm{card}(E_i)=\lambda_i$ to complete the proof of Lemma \ref{prin2}. Since $$\sum_{i}^{}\mathrm{card}(E_{i})=\sum_{i}^{}\lambda_{i},$$ it suffices to show that $\mathrm{card}(E_i)\leq\lambda_i$ for all $i$. Unfortunately, combinatorial arguments are not enough to reach the conclusion. We need to make use of the action of Steenrod algebra in order to realize these inequalities.
\subsection{Proof}\label{milnorop}
The proof of the key lemma now goes as follows.
\begin{proof}[Proof of Lemma \ref{prin2}]
The image $\omega_\alpha$ is a sum of elements of the type
$\prod_{i=1}^{d}u^{p^{l_{i}}}\otimes v_{i}$
 in $S^d(F(1) \otimes V^{\sharp})$. Therefore, the following equality holds:
$$
\lambda_1+p^{\delta}\lambda_2 +p^{2\delta}\lambda_3+\cdots +p^{(t-1)\delta}\lambda_t
=p^{l_1}+\cdots+p^{l_d}
$$

We now show that in $\prod_{i=1}^{d}u^{p^{l_{i}}}\otimes v_{i}$, $u$ appears  $\lambda_1$ times,
$u^{p^{\delta}}$ appears  $\lambda_2$ times and so on, $u^{p^{(i-1)\delta}}$ appears $\lambda_{i}$
times. Lemma \ref{comb} implies that $u$ appears at most
$\lambda_1$ times. By showing the same result for all $i$ we get $\mathrm{card}( E_i)=\lambda_i$ 
and the proof is completed.

It follows from Proposition \ref{Stee} that 
$$
m_{\lambda_{1}}(t\delta)\left(\omega_{\left(0,\delta,2\delta,\ldots,(t-1)\delta\right)}\right)=\omega_{(t{\delta},\delta, \ldots, (t-1)\delta)}.
$$
On the other hands
$$
\omega_{(t{\delta},\delta, \ldots, (t-1)\delta)}=\mathrm{P}_0^{\delta}\omega_{((t-1)\delta,0,\delta,
\ldots , (t-2)\delta)}.
$$
We denote by $k_1$ the index such that $\som{i=1}{k_1}{p^{l_i}}=\lambda_1$. Because
$$\som{i=k_1+1}{d}{p^{l_i}}=p^{\delta}\left(\lambda_2 +p^{\delta}\lambda_3+\cdots
+p^{(t-2)\delta}\lambda_t\right)$$
we must have $p^{l_{k_1+1}}>\lambda_1$. Otherwise, the $p-$adic length of $p^{\delta}-p^{l_{k_1+1}}$
is greater than $d$. Follow Proposition \ref{Stee}, we obtain:
\begin{align*}
m_{\lambda_{1}}(t\delta)\left(\prod_{i=1}^{d}u^{p^{l_{i}}}\otimes
v_{i}\right)&=\mathrm{P}_{0}^{t\delta}\left(\prod_{i=1}^{k_{1}}u^{p^{l_{i}}}\otimes
v_{i}\right)\cdot \prod_{i=1+k_{1}}^{d}u^{p^{l_{i}}}\otimes v_{i}
\end{align*}
Because every morphism in $\ho{\u}{F(\lambda)}{S^{d,V}(F(1))}$ is $\ap-$linear, it follows from
\begin{align*}
m_{\lambda_{1}}(t\delta)\left(\omega_{\left(0,\delta,2\delta,\ldots,(t-1)\delta\right)
}\right)&=\mathrm{P}_0^{\delta}\omega_{((t-1)\delta,0,\delta,
\ldots , (t-2)\delta)}
\end{align*}
that $l_{i}\geq \delta$ for all $i>k_1$. Moreover 
$p^{\delta}\lambda_2=\som{i\in E_2}{}{p^{l_i}}$
hence the equality $\lambda_2=\som{i\in E_2}{}{p^{l_i-\delta}}$ holds. 
It means that $\mathrm{Card}(E_2)\leq \lambda_2$. Similarly, by considering the elements
$$m_{\lambda_{1}+\lambda_2+\ldots+\lambda_i}(t\delta)\left(\omega_{\left(0,\delta,2\delta,\ldots,
(t-1)\delta\right)}\right)$$ we obtain the inequalities $\mathrm{Card}(E_{i+1})\leq \lambda_{i+1}$.
This concludes the lemma.
\end{proof}

\bibliographystyle{alpha}
\bibliography{Thuvien}
\end{document}